\newtheorem{property}{Property}
\newtheorem{theorem}{Theorem}
\newtheorem{corollary}{Corollary}
\newtheorem{observation}{Observation}
\begin{document}

\title{Robust single-stage selection problems\\ with budgeted interval uncertainty}

\author{Antoine Lhomme$^1$ \and Nadia Brauner$^1$ \and Evgeny Gurevsky$^2$
\and Mikhail Y. Kovalyov$^3$ \and Erwin Pesch$^4$\\[0.5cm]
{\normalsize $^1$ Université Grenoble Alpes, CNRS, Grenoble INP, G-SCOP, France}\\
\normalsize $^2$ LS2N, University of Nantes, France \\
\normalsize $^3$ National Academy of Sciences of Belarus, Minsk, Belarus\\
\normalsize $^4$ University of Siegen, Germany,  HHL Leipzig Graduate School of Management, Germany
}

\maketitle

\begin{abstract} 
We study single-stage decision problems in which a subset of items with minimum total cost has to be selected at once from a given set of items, subject to two costs of each item -- fixed and uncertain -- and cardinality constraints for each cost type. The worst-case budgeted interval uncertainty is considered. At the time of decision making, the fixed costs are known, but for each uncertain cost, only the range of its values is available. Similar but two-stage selection problems have been studied in the literature, in which first- and second-stage decisions are made before and after uncertain costs become known, respectively.  The problems studied are distinguished by continuous or discrete uncertain costs, and by uncertainty budgets based on cardinality or volume. An almost complete computational complexity classification is provided, including fast polynomial-time algorithms, NP- and $\Sigma^p_2$-completeness and hardness proofs. 
\end{abstract}

{\bf keyword}
robust optimization --  budgeted uncertainty -- selection problem -- dynamic programming -- computational complexity





\section{Introduction and literature review} \label{SecIntrod} 
Kasperski and Zieli\'nski \cite{Kasperski2017} and Chassein et al. \cite{Chassein2018} introduce several robust cost minimization item selection problems with interval uncertainty of the costs. They note that in practice these problems often have a two-stage nature such that the item costs in the first stage are precisely known, while the item costs in the second stage are uncertain.  The worst-case realization of uncertain costs is considered, which can be viewed as the policy of an {\it adversary}. 

In \cite{Kasperski2017} and \cite{Chassein2018}, it is assumed that the selection decision is made in two stages, before and after the uncertainty is realized. We study single-stage problems, where the selection decision has to be made once before the uncertainty is realized. The single-stage problem arises in situations where the selected (manufactured, purchased, completed) items (products, projects) are required to be specified in a long-term contract. The contract should specify which fixed-cost items will be delivered first, and which uncertain-cost items will be delivered later. Optimal decisions for the single-stage cost minimization problems provide upper bounds on the minima for their two-stage counterparts in \cite{Kasperski2017} and \cite{Chassein2018}, since they use less information than the two-stage ones.

In the deterministic version of the item selection problem, there is a set of 
 $n$ items with given costs, and the objective is to select $p$ items such that their total cost is minimized. An $O(n)$ time algorithm for this problem is based on the median finding technique of Blum et~al.~\cite{Blum1973}: find the $p$-th smallest item cost $c$ and then select $p$ items with costs no greater than $c$. The problem becomes more difficult if the item costs are uncertain. 

Chassein et al. \cite{Chassein2018} consider the case of \textit{budgeted uncertainty}, the concept of which was introduced by Bertsimas and Sim \cite{Bertsimas2003, Bertsimas2004}. According to this concept, the vector of item costs can be any vector from a set of \textit{discrete} or \textit{continuous interval scenarios}. The difference between the two types of scenario is that the uncertain cost can take one of two values -- the smallest or the largest -- in the discrete scenario, and can take any value between the smallest and the largest in the continuous scenario. The budgeted uncertainty sets include item cost vectors which are  within a given range, denoted as $\Gamma$ and called \textit{uncertainty budget}, from a given cost vector. 

Depending on the cardinality of the item sets selected in the first and second stages, two cases are studied in \cite{Chassein2018}. They  are denoted as problems {\sc RREC} (Robust Recoverable) and {\sc R2ST} (Robust Two-Stage). In {\sc RREC}, $p$ items have to be selected in both stages and no item can be selected twice, while in {\sc R2ST}, $p$ items have to be selected in each stage and at least $p-k$ items have to be selected twice (no more than $k$ new items have to be selected in the second stage). 

The polynomial solvability of the continuous variants of {\sc RREC} and {\sc R2ST} is proven in~\cite{Chassein2018} by analyzing their mixed-integer linear programming formulations. Discrete variants of these problems are proven to be NP-hard in the ordinary sense by Goerigk et~al.~\cite{Goerigk2022}. The questions of whether the discrete variants are pseudo-polynomially solvable or NP-hard in the strong sense are yet to be answered, as well as their hardness in some class of the polynomial-time hierarchy (see, {\it e.g.}, Stockmeyer \cite{Stockmeyer1976}), for example, $\Sigma^p_2$-hardness.  Kasperski and Zieli\'nski \cite{Kasperski2017} propose $O(n)$ and $O(n^2(p-k+1))$ algorithms for the continuous variants of {\sc RREC} and {\sc R2ST}, respectively, with the non-restricting uncertainty budget. 

In the next section, we formulate single-stage counterparts of {\sc RREC} and {\sc R2ST} with the budgeted interval uncertainty. The uncertainty budget can be of two types: volume budget accounting for the total increase of the uncertain costs (with respect to their smallest nominal values), and cardinality budget accounting for the number of items with non-zero increase of the uncertain cost. 
Properties of the discrete single-stage problems with cardinality budget are established in Section \ref{SecDP}, and $O(np^2)$ and $O(nk^2p)$ time dynamic programming algorithms for the  {\sc RREC} and {\sc R2ST} counterparts, respectively, are developed in this case. Note that the same two-stage problems are NP-hard due to Goerigk et~al.~\cite{Goerigk2022}. 

The polynomial solvability of models with budgeted uncertainty was the main motivation for the introduction of this type of uncertainty by Bertsimas and Sim \cite{Bertsimas2003, Bertsimas2004}.  The continuous single-stage counterparts of the problems  {\sc RREC} and {\sc R2ST} with cardinality budget are  equivalent to their discrete analogs, and therefore, they can be solved by the same polynomial-time algorithms. 

In Section~\ref{SecU_D1}, we consider volume budget and prove that the single-stage problems are NP-hard in the ordinary sense and $\Sigma_2^p$-hard for discrete variants and that they are polynomially solvable for continuous variants.  Note that a $\Sigma^p_2$-hard problem cannot be formulated in polynomial time as a mixed-integer linear program (MILP) of polynomial size if the commonly accepted conjecture holds (see, {\it e.g.}, Woeginger \cite{Woeginger2021}). 

Some of the obtained results are adjusted for the more general {\it ``weighted uncertainty''} case in Section \ref{SecW}. In this case, weights are associated with the items, and contributions of the uncertain costs to the uncertainty budget are weighted.  The paper concludes  with a table of complexity results for the studied problems and suggestions for future research. 

\section{Formulating single-stage problems} \label{SForm} The problems in this paper are to make a single-stage minimum cost selection of items from a given set. Each item is associated with a fixed cost and an uncertain cost. The decision consists of two parts: selecting items with fixed costs and selecting items with uncertain costs.  There are cases in which the same item can be selected twice -- with both fixed and uncertain cost. We call the total uncertain cost the {\it adversarial cost}.

Single-stage item selection problems studied in this paper can be formulated as follows. Denote $N=\{1,\ldots,n\}$. There are given non-negative rational vectors $C=(C_1,\ldots,C_n)$, $\underline c=(\underline c_1,\ldots,\underline c_n)$ and $d=(d_1,\ldots,d_n)$, a positive rational number~$\Gamma$, variable binary vectors $x=(x_1,\ldots,x_n)$ and $y=(y_1,\ldots,y_n)$, a set $X\times Y$ of feasible pairs $(x,y)$, and a variable non-negative real vector $\delta=(\delta_1,\ldots,\delta_n)$. Below we formulate in a vector form continuous and discrete single-stage item selection problems with two types of interval uncertainty budget, which takes into account the volume or the number of uncertain costs deviating from their nominal (minimum) values. We denote these problems as {\sc Con-Vol}, {\sc Dis-Vol}, {\sc Con-Car} and {\sc Dis-Car}. 

{\bf Problem}  {\sc Con-Vol} (continuous uncertainty with volume budget):
$$\min_{(x,y)\in X\times Y}\left\{Cx+\max\limits_{0\le \delta\le d}\Big\{(\underline c+\delta)y: \sum_{i\in N}\delta_i\le\Gamma\Big\}\right\}.$$

{\bf Problem}  {\sc Dis-Vol} (discrete uncertainty with volume budget):$$\min_{(x,y)\in X\times Y}\left\{Cx+\max\limits_{\delta_i\in \{0,d_i\},i\in N}\Big\{(\underline c+\delta)y: \sum_{i\in N}\delta_i\le\Gamma\Big\}\right\}.$$

{\bf Problem}  {\sc Con-Car} (continuous uncertainty with cardinality budget):$$\min_{(x,y)\in X\times Y}\left\{Cx+\max\limits_{0\le \delta\le d}\Big\{(\underline c+\delta)y: |\{i\in N:\delta_i>0\}|\le\Gamma\Big\}\right\}.$$

{\bf Problem} {\sc Dis-Car} (discrete uncertainty with cardinality budget):$$\min_{(x,y)\in X\times Y}\left\{Cx+\max\limits_{\delta_i\in \{0,d_i\},i\in N}\Big\{(\underline c+\delta)y: |\{i\in N:\delta_i=d_i\}|\le\Gamma\Big\}\right\}.$$

To simplify notation and facilitate further discussion, we denote solutions of these problems as ordered triples $(x,y,\delta)$, although only $x$ and $y$ represent the solution of the decision maker and $\delta$ is the strategy of the adversary. 
Let $p$ and $k$ be given positive integer numbers such that $0 \le  k\le p\le n$. In this paper, we study the following special cases of  these problems:
\begin{itemize}
\item {\sc Con-Vol}($p$), {\sc Dis-Vol}($p$), {\sc Con-Car}($p$), {\sc Dis-Car}($p$) -- special cases of the problems {\sc Con-Vol}, {\sc Dis-Vol}, {\sc Con-Car} and {\sc Dis-Car}, respectively, in which 
$$X\times Y=\Big\{(x,y): \sum_{i\in N}(x_i+y_i)=p, \sum_{i\in N} x_iy_i=0\Big\}.$$ 
These single-stage problems are counterparts of the two-stage problem {\sc RREC} in \cite{Chassein2018}. 
\item  {\sc Con-Vol}($p,k$), {\sc Dis-Vol}($p,k$), {\sc Con-Car}($p,k$), {\sc Dis-Car}($p,k$) -- special cases of the problems {\sc Con-Vol}, {\sc Dis-Vol}, {\sc Con-Car} and {\sc Dis-Car}, respectively, in which $$X\times Y=\Big\{(x,y): \sum_{i\in N} x_i=p,  \sum_{i\in N} y_i=p, \sum_{i\in N} x_iy_i\ge p-k\Big\}.$$  These single-stage problems are counterparts of the two-stage problem {\sc R2ST} in \cite{Chassein2018}. 
\end{itemize}

The uncertain cost of each item $i$ is of the form $\underline c_i+\delta_i$. In the continuous problems, $\delta_i$ can take any real value from $[0,d_i]$, and in the discrete problems $\delta_i\in\{0,d_i\}$. In the problems {\sc Con-Vol}($p$), {\sc Dis-Vol}($p$), {\sc Con-Car}($p$) and {\sc Dis-Car}($p$), at most $p$ items with fixed costs have to be selected and, if only $r$ (where $r\le p$) such items have been selected, then $p-r$ different items with uncertain costs should be selected. In the problems  {\sc Con-Vol}($p,k$), {\sc Dis-Vol}($p,k$), {\sc Con-Car}($p,k$) and {\sc Dis-Car}($p,k$), exactly~$p$ items with fixed costs and exactly $p$ items with uncertain costs have to be selected, and at most~$k$ (where  $0\le k\le p$) selected items with uncertain costs can be different from the selected items with fixed costs. 

Vector $x$ represents the selection of items with fixed costs, and vector $y$ represents the selection of items with uncertain costs. The uncertainty is assumed to have a budget expressed by $\Gamma$. In the problems {\sc Con-Vol}($p$), {\sc Dis-Vol}($p$), {\sc Con-Vol}($p,k$) and {\sc Dis-Vol}($p,k$) with {\it volume budget}, the budget constraint is represented by the relation $\sum_{i\in N}\delta_i\le\Gamma$. In the problems {\sc Con-Car}($p$), {\sc Dis-Car}($p$), {\sc Con-Car}($p,k$) and {\sc Dis-Car}($p,k$) with {\it cardinality budget}, it is driven by the relation $|\{i\in N:\delta_i>0\}|\le\Gamma$.

\section{Cardinality budget: polynomial algorithms} \label{SecDP} We begin by considering the discrete problems with cardinality budget.
Suppose the items are re-numbered such that \mbox{$d_1\ge \cdots\ge d_n$.} In this section, assume without loss of generality that $\Gamma$ is integer. If it is non-integer, re-set $\Gamma:=\lfloor\Gamma\rfloor$.
Our algorithm for the  problem {\sc Dis-Car}($p$) is based on the following property.  

\begin{property} \label{PRUDr2} There exists an optimal solution $(x,y,\delta)$ of the problem {\sc Dis-Car}$(p)$ such that \mbox{$\delta_i=d_i$} for $\min\{\sum_{j\in N} y_j,\Gamma\}$ smallest indices $i$ among those with $y_i=1$. The remaining $\delta_i$ values are equal to zero. 
\end{property} 

\begin{proof} Consider an optimal solution $(x,y,\delta)$ of the problem {\sc Dis-Car}($p$) and assume that Property 
\ref{PRUDr2} is not satisfied. Then, there exist two indices $i$ and $j$ such that $i<j$, $y_i=y_j=1$, $\delta_i=0$ and $\delta_j=d_j$. Modify this solution by re-setting $\delta_i=d_i$ and $\delta_j=0$. This modification does not change the adversarial cost $\max\limits_{\delta_i\in \{0,d_i\},i\in N}\Big\{\delta y: |\{i\in N:\delta_i=d_i\}|\le\Gamma\Big\}$ if $d_i=d_j$ 
and it increases this cost if $d_i>d_j$. Therefore, the new solution remains optimal for the adversary in the first case and the original solution was not optimal for the adversary in the second case. Repetition of this modification a finite number of times completes the proof.
\end{proof}

For a given $y$, introduce index $r(y)=\max\{j: y_j=1,j\in N\}$ and set 
$$I(y)=\Big\{j: y_j=1, \sum_{h=1}^j y_h\le\Gamma,1\le j\le r(y)\Big\}.$$ 
Due to Property \ref{PRUDr2}, the problem  {\sc Dis-Car}($p$) reduces to the problem 
$$\min_{(x,y)}\Big\{Cx+\underline cy+\sum_{i\in I(y)}d_iy_i: \sum_{i\in N}(x_i+y_i)=p, \sum_{i\in N} x_iy_i=0\Big\},$$
for which we keep the same notation  {\sc Dis-Car}($p$). For this problem, the $\delta$-part of the solution is fully determined by its $y$-part. This problem can be solved by the following dynamic programming algorithm. 

The algorithm iteratively constructs partial solutions $(x,y)$, in which variables $x_i$ and $y_i$ are determined for $i=1,\ldots,j$, and $j$ is the size of vectors $x$ and $y$ in iteration $j$, $j\in N$. Recall that $d_1\ge \cdots\ge d_n$. Each partial solution $(x,y)$ is associated with a state $(j,k_x,k_y)$, where 
 $k_x=|\{i:x_i=1,y_i=0\}|$ and $k_y=|\{i:x_i=0,y_i=1\}|$. For each state $(j,k_x,k_y)$, the function $G_j(k_x,k_y)$ is recursively calculated, which is the minimum value of $Cx+\underline cy+\sum_{i\in I(y)}d_iy_i$ among all partial solutions $(x,y)$ in this state. It is clear that if a partial solution in the state $(j,k_x,k_y)$ can be extended to a complete optimal solution of the problem  {\sc Dis-Car}($p$), then a solution with the value $G_j(k_x,k_y)$ can be extended in the same way to an optimal solution as well. 

The initialization is $G_0(0,0)=0$ and the recursion for $j\in N$, $k_x=0,1,\ldots,p$,  $k_y=0,1,\ldots,p$, $k_x+k_y\le p$, is
\begin{align*}
G_j(k_x,k_y)=\min\begin{cases} G_{j-1}(k_x,k_y), {\rm\ if\ }  k_x+k_y<j,                               & [x_j=0, y_j=0]  \cr
G_{j-1}(k_x-1,k_y)+C_j,                                                         & [x_j=1, y_j=0]  \cr
G_{j-1}(k_x,k_y-1)+\underline c_j+d_j, {\rm\ if\ }  k_y\le \Gamma,               & [x_j=0, y_j=1]  \cr
G_{j-1}(k_x,k_y-1)+\underline c_j, {\rm\ if\ } k_y > \Gamma.                 & [x_j=0, y_j=1]
\end{cases}
\end{align*}

Thus, the optimal solution value is equal to $G^*=\min\{G_n(k_x,k_y): k_x+k_y=p\}$.
The corresponding optimal solution $(x^*,y^*)$ can be found by going backwards through the recursive equations. The time and space requirements of this algorithm are equal to the cardinality of the state space, which is $O(np^2)$.

The following property is the basis of our algorithm for the  problem  {\sc Dis-Car}($p,k$).

\begin{property} \label{PRUD2rr}  
Let $(x,y)$ be an optimal solution  of the problem  {\sc Dis-Car}$(p,k)$. Then, solution $(x,y,\delta^{(y)})$ is also optimal, where $\delta^{(y)}_i=d_i$ for $i\in I(y)$ and $\delta^{(y)}_i=0$ for $i\not\in I(y)$.

\end{property}

\begin{proof} 
Consider the case $\Gamma\ge p$. Then, for any feasible solution $y$, we have $|\{i: y_i=1,i\in N\}|=p\le\Gamma$ and $\{i: y_i=1,i\in N\}=I(y)$. Hence, the value of $\max\limits_{\delta_i\in \{0,d_i\},i\in N}\{\delta y: |\{i\in N:\delta_i=d_i\}|\le\Gamma\}$ is achieved at $\delta_i=d_i$ for $y_i=1$, $i\in N$, and, as a consequence, for $i\in I(y)$. Now, consider the case $\Gamma< p$. In this case, the maximum value above is evidently achieved for $\delta_i=d_i$, $i\in I(y)$, and $\delta_i=0$ for $i\not\in I(y)$.
\end{proof}

Due to Property \ref{PRUD2rr}, in the case $\Gamma\ge p$, equality $I(y)=\{i: y_i=1, i\in N\}$ is satisfied, and the problem {\sc Dis-Car}($p,k$) reduces to the problem 
\begin{equation} \label{ProbGam} \min_{(x,y)}\Big\{Cx+(\underline c+d)y: \sum_{i\in N} x_i=p,  \sum_{i\in N} y_i=p, \sum_{i\in N} x_iy_i\ge p-k\Big\}.\end{equation} 
In the case $\Gamma< p$, it reduces to the problem 
$$\min_{(x,y)}\Big\{Cx+\underline cy+\sum_{i\in I(y)}d_iy_i: \sum_{i\in N} x_i=p,  \sum_{i\in N} y_i=p, \sum_{i\in N} x_iy_i\ge p-k\Big\},$$ which we denote as
 {\sc Dis-Car}($p,k,\Gamma<p$). Below we present an $O(nk^2p)$ time dynamic programming algorithm for this problem, and show how it can be modified to solve the problem in (\ref{ProbGam}) for the case $\Gamma\ge p$ of  {\sc Dis-Car}($p,k$).

Recall that $d_1\ge \cdots\ge d_n$.  Similar to the previous dynamic programming algorithm, partial solutions $(x,y)$ are iteratively constructed. Each partial solution $(x,y)$ is associated with a state $(j,k_{x\backslash y},k_{x\cap y},k_{y\backslash x})$, where  $k_{x\backslash y}=|\{i:x_i=1,y_i=0\}|$, $k_{y\backslash x}=|\{i:x_i=0,y_i=1\}|$ and $k_{x\cap y}=|\{i:x_i=1,y_i=1\}|$. For each state $(j,k_{x\backslash y},k_{x\cap y},k_{y\backslash x})$, function $H_j(k_{x\backslash y},k_{x\cap y},k_{y\backslash x})$ is recursively calculated, which is the minimum value of $Cx+\underline cy+\sum_{i\in I(y)}d_iy_i$ among all partial solutions $(x,y)$ in this state. It is clear that if a partial solution in the state $(j,k_{x\backslash y},k_{x\cap y},k_{y\backslash x})$ can be extended to a complete optimal solution of the problem {\sc Dis-Car}($p,k,\Gamma< p$), then a solution with the value $H_j(k_{x\backslash y},k_{x\cap y},k_{y\backslash x})$ can be extended in the same way to an optimal solution as well. 

Note that the constraints $\sum_{i\in N} x_i=p$,  $\sum_{i\in N} y_i=p$ and $\sum_{i\in N} x_iy_i\ge p-k$ imply 
$k_{x\backslash y}\le k$ and $k_{y\backslash x}\le k$. The initialization is $H_0(0,0,0)=0$ and the recursion for $j\in N$, $k_{x\backslash y}=0,1,\ldots,k$,  $k_{x\cap y}=0,1,\ldots,p$, $k_{y\backslash x}=0,1,\ldots,k$, $k_{x\backslash y}+k_{x\cap y}\le p$, $k_{x\cap y}+k_{y\backslash x}\le p$, is
\begin{align*}
H_j(k_{x\backslash y},k_{x\cap y},k_{y\backslash x})=
\end{align*}
\begin{align*}
\min\begin{cases}H_{j-1}(k_{x\backslash y},k_{x\cap y},k_{y\backslash x}), {\rm\ if\ } k_{x\backslash y} + k_{x\cap y} + k_{y\backslash x} < j,                                                            & [x_j=0, y_j=0] \cr 
H_{j-1}(k_{x\backslash y}-1,k_{x\cap y},k_{y\backslash x})+C_j,                                                                         & [x_j=1, y_j=0] \cr 
H_{j-1}(k_{x\backslash y},k_{x\cap y},k_{y\backslash x}-1)+\underline c_j+d_j, {\rm\ if\ } k_{x\cap y}+k_{y\backslash x}\le \Gamma,     & [x_j=0, y_j=1] \cr 
H_{j-1}(k_{x\backslash y},k_{x\cap y},k_{y\backslash x}-1)+\underline c_j, {\rm\ if\ }  k_{x\cap y}+k_{y\backslash x}>\Gamma,   & [x_j=0, y_j=1] \cr
H_{j-1}(k_{x\backslash y},k_{x\cap y}-1,k_{y\backslash x})+C_j+\underline c_j+d_j, {\rm\ if\ } k_{x\cap y}+k_{y\backslash x}\le \Gamma, & [x_j=1, y_j=1] \cr
H_{j-1}(k_{x\backslash y},k_{x\cap y}-1,k_{y\backslash x})+C_j+\underline c_j, {\rm\ if\ } k_{x\cap y}+k_{y\backslash x}> \Gamma.   & [x_j=1, y_j=1] 
\end{cases}
\end{align*}

The optimal solution value is equal to
$$H^*=\min\{H_n(k_{x\backslash y},k_{x\cap y},k_{y\backslash x}): k_{x\backslash y}+k_{x\cap y}=p,k_{x\cap y}+k_{y\backslash x}=p,k_{y\backslash x}\le k\}.$$ 
The corresponding optimal solution $(x^*,y^*)$ can be found by backtracking. The time and space requirements of this algorithm are equal to $O(nk^2p)$.

The presented algorithm can be modified to solve the problem in (\ref{ProbGam}) for the case $\Gamma\ge p$ of  {\sc Dis-Car}($p,k$). The modification consists of removing all the  conditions ``$\ldots\le\Gamma$'' as well as the whole lines concerning the conditions ``$\ldots>\Gamma$''  under the minimum of the recursion. This modification does not change the running time estimation.

We complete this section by noting that the continuous problems {\sc Con-Car}($p$) and {\sc Con-Car}($p,k$) with the cardinality budget reduce to the same discrete problems {\sc Dis-Car}($p$) and {\sc Dis-Car}($p,k$), respectively. Indeed, if $0<\delta_i\le d_i$, then in the problems {\sc Con-Car}($p$) and {\sc Con-Car}($p,k$) it is always profitable for the adversary to set $\delta_i=d_i$ because this cost increase does not affect the cardinality constraint $|\{i\in N:\delta_i>0\}|\le\Gamma$. The algorithmic results of this section are summarized in the following theorem.
\begin{theorem} \label{TCntn} The problems {\sc Con-Car}$(p)$ and {\sc Dis-Car}$(p)$ can be solved in $O(np^2)$ time, and the problems {\sc Con-Car}$(p,k)$ and {\sc Dis-Car}$(p,k)$ can be solved in $O(nk^2p)$ time.
\end{theorem}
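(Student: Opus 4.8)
The plan is to assemble Theorem~\ref{TCntn} from the pieces already developed in this section, checking the correctness and the complexity of each dynamic program together with the reductions that feed into them.

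First, the two discrete problems. By Property~\ref{PRUDr2} (resp. Property~\ref{PRUD2rr}), there is an optimal solution in which the adversary's vector $\delta$ is the one induced by $I(y)$, namely $\delta_i=d_i$ for $i\in I(y)$ and $\delta_i=0$ otherwise; hence {\sc Dis-Car}$(p)$ (resp. {\sc Dis-Car}$(p,k)$) reduces to the purely combinatorial minimization over $(x,y)$ of $Cx+\underline cy+\sum_{i\in I(y)}d_iy_i$ displayed above. It then remains to argue that the stated recursions compute $G^*$ (resp. $H^*$) correctly. The one point that needs care is the bookkeeping of $I(y)$: since the items are indexed so that $d_1\ge\cdots\ge d_n$ and are scanned in that order, an item $j$ with $y_j=1$ belongs to $I(y)$ exactly when the number of $y$-selected items among $1,\dots,j$ is at most $\Gamma$; in state $(j,k_x,k_y)$ this running count equals $k_y$, and in state $(j,k_{x\backslash y},k_{x\cap y},k_{y\backslash x})$ it equals $k_{x\cap y}+k_{y\backslash x}$. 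Thus the case split ``$\le\Gamma$'' versus ``$>\Gamma$'' adds $\underline c_j+d_j$ precisely when $j\in I(y)$ and $\underline c_j$ otherwise, matching the reduced objective. The remaining cases of each recursion enumerate the feasible combinations of $(x_j,y_j)$ — three for the {\sc RREC}-type problem, where $x_jy_j=0$ rules out $(1,1)$, and all four for the {\sc R2ST}-type problem — with the $y_j=1$ alternatives further split as above, while the condition attached to the $(0,0)$ case merely encodes that the number of selected items among $1,\dots,j-1$ cannot exceed $j-1$. The terminal minimizations defining $G^*$ and $H^*$ impose exactly the cardinality requirements of $X\times Y$ ($\sum x_i=\sum y_i=p$ with empty intersection for the {\sc RREC}-type problem; $\sum x_i=\sum y_i=p$ and $\sum x_iy_i=k_{x\cap y}\ge p-k$ for the {\sc R2ST}-type problem). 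Correctness then follows by the usual induction on $j$: every partial solution lies in a unique state, and the recursion maintains, for each state, the minimum objective over partial solutions in that state.

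Second, the running times follow by counting states, each of which is processed in $O(1)$ time and reached by a backward pass of linear length. For {\sc Dis-Car}$(p)$ the states are the triples $(j,k_x,k_y)$ with $j\in N$, $0\le k_x,k_y\le p$ and $k_x+k_y\le p$, hence $O(np^2)$ of them. For {\sc Dis-Car}$(p,k,\Gamma<p)$ the constraints force $k_{x\backslash y}\le k$ and $k_{y\backslash x}\le k$ while $k_{x\cap y}\le p$, giving $O(nk^2p)$ states. For the case $\Gamma\ge p$ of {\sc Dis-Car}$(p,k)$, Property~\ref{PRUD2rr} gives $I(y)=\{i:y_i=1\}$, so the problem becomes~(\ref{ProbGam}); removing the ``$\le\Gamma$'' guards and the ``$>\Gamma$'' lines from the recursion solves it over the same state space, so the $O(nk^2p)$ bound is unchanged. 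For the continuous variants, raising any $\delta_i$ from a positive value up to $d_i$ never violates the cardinality budget $|\{i\in N:\delta_i>0\}|\le\Gamma$ and can only increase $(\underline c+\delta)y$, so the adversary has an optimal response with $\delta_i\in\{0,d_i\}$ for every $i$; consequently {\sc Con-Car}$(p)$ and {\sc Con-Car}$(p,k)$ are equivalent to {\sc Dis-Car}$(p)$ and {\sc Dis-Car}$(p,k)$ and are solved by the same algorithms within the same time bounds, completing the proof.

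The only genuinely non-routine step is the verification that the running count of $y$-selected items recorded in the state coincides with membership in $I(y)$; this is where the ordering $d_1\ge\cdots\ge d_n$ is used, and I would present it in full, treating the induction on $j$ and the state counts briefly.
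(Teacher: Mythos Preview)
Your approach is correct and matches the paper's: Theorem~\ref{TCntn} is a summary of the section's algorithmic development, and your assembly of Properties~\ref{PRUDr2}--\ref{PRUD2rr}, the two dynamic programs, the state counts, and the {\sc Con}-to-{\sc Dis} reduction is exactly how it is justified. One small slip to fix: for the {\sc RREC}-type problem the terminal condition $k_x+k_y=p$ encodes $\sum_{i\in N}(x_i+y_i)=p$ with $x$ and $y$ disjoint, not $\sum x_i=\sum y_i=p$ as you wrote; only the {\sc R2ST}-type problem selects $p$ items of each cost type.
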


\section{Volume budget} \label{SecU_D1} 
In Section \ref{SSVolD}, we present NP-hardness and $\Sigma_2^p$-hardness proofs for discrete problems with volume budget and polynomial algorithms for their special cases, in which either $p$ is fixed or $\Gamma\in\{0,\infty\}$. Section~\ref{SSVolC} contains polynomial algorithms for continuous problems with volume budget.
\subsection{Discrete problems with volume budget} \label{SSVolD}
Consider the NP-complete problem {\sc Equal Cardinality Partition (ECP)}: Given positive integer numbers $a_1,\ldots,a_{2h}$ with $\sum_{i=1}^{2h}a_i=2A$, is there a subset $S\subset H=\{1,\ldots,2h\}$ such that $|S|=h$ and $\sum_{i\in S}a_i=A$? 
Assume without loss of generality that all numbers are multiples of~2. 

For given $x$ and $y$, the general problem {\sc Dis-Vol} reduces to the adversarial sub-problem that is to maximize the total cost subject to the limited budget. We denote adversarial sub-problems for the problems {\sc Dis-Vol}($p$) and {\sc Dis-Vol}($p,k)$ as {\sc Adv}$_{x,y}(p)$ and {\sc Adv}$_{x,y}(p,k)$, respectively.  

\begin{theorem} \label{PRUD1NPr2} The problems {\sc Adv}$_{x,y}(p)$ and {\sc Dis-Vol}$(p)$  are NP-hard even if $C_i=\infty$ (the fixed costs are unacceptable) and $\underline c_i=0$, $i\in N$. 
\end{theorem}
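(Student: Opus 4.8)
The plan is to reduce from {\sc ECP}. Given an instance $a_1,\ldots,a_{2h}$ with $\sum_i a_i = 2A$, I would build an instance of {\sc Dis-Vol}$(p)$ with $n = 2h$ items, set $p := h$, and choose the uncertain-cost data so that the adversarial sub-problem {\sc Adv}$_{x,y}(p)$ encodes the partition question. Since $C_i = \infty$ forces $x \equiv 0$, every feasible solution must select exactly $p = h$ items via $y$; the decision maker picks a size-$h$ subset $S = \{i : y_i = 1\}$ of $H$. With $\underline c_i = 0$, the objective is just the adversarial cost $\max\{\delta y : \delta_i \in \{0,d_i\},\ \sum_i \delta_i \le \Gamma\}$, i.e.\ the adversary picks a sub-subset $T \subseteq S$ with $\sum_{i\in T} d_i \le \Gamma$ maximizing $\sum_{i\in T} d_i$ — a subset-sum/knapsack-type quantity over the $d_i$ restricted to $S$.

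The key design choice is to take $d_i = a_i$ for all $i$ and $\Gamma := A$. Then for a chosen set $S$ with $|S| = h$, the decision maker's cost equals the largest subset sum of $\{a_i : i \in S\}$ that does not exceed $A$. The decision maker wants this to be small. I would argue: if some $S$ with $|S|=h$ has $\sum_{i\in S} a_i = A$, then the adversary can take $T = S$ and the cost is exactly $A$; and in fact for \emph{any} $S$ the cost is at least $\min\{A, \sum_{i\in S} a_i\}$, because if $\sum_{i\in S}a_i \le A$ the adversary takes all of $S$, and if $\sum_{i\in S}a_i > A$ then — using that all $a_i$ are even, hence the subset sums of $S$ hit every even value needed, or more simply a greedy/exchange argument — the adversary can always reach a sum in the window $(A - a_{\min}, A]$, which I would lower-bound by something like $A$ minus a correction. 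This is where the even-multiples assumption is used, and it is the part that needs care: I must ensure that the adversarial optimum is \emph{exactly} $A$ when a balanced half exists, and \emph{strictly greater than} (or in a cleaner version, still at least $A$, with the YES case achieving precisely a target threshold) otherwise. A robust way to force separation is to scale: replace $a_i$ by $2a_i$ and add a small slack so that a perfectly balanced $S$ gives cost exactly $2A$ while any unbalanced $S$ has $\sum_{i\in S} 2a_i \ge 2A+2$, and then the adversary's best sum $\le \Gamma := 2A$ is still $2A$ — so this naive threshold does not separate. Hence the real reduction likely goes the other way: make the decision maker want a \emph{large} guaranteed cost impossible, or equivalently set it up as a question "is the optimum $\le$ some value", and choose the $a_i$-encoding so the adversary is \emph{forced} above $A$ unless $S$ is balanced.

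Concretely I expect the clean version to be: keep $d_i = a_i$, $\Gamma = A$, $p = h$, and ask whether the optimum of {\sc Dis-Vol}$(p)$ is at most $A$. For any $S$ with $|S| = h$: if $\sum_{i\in S} a_i \le A$ the adversary's value is $\sum_{i\in S} a_i \le A$; if $\sum_{i\in S} a_i > A$, then since the complement $H\setminus S$ also has size $h$ and sums to $2A - \sum_{i\in S} a_i < A$, and since all $a_i \ge 1$, one shows the best subset of $S$ with sum $\le A$ has value $\le A$ always — so the adversary's value never exceeds $A$, making every $S$ optimal and the reduction trivial/wrong. This tells me the correct target must be a question like "is the optimum \emph{equal to} $A$" is not a valid decision query either. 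The resolution, and the actual main obstacle, is to add a \emph{fixed} high-cost structure or extra ``blocker'' items with carefully chosen $\underline c_i, d_i$ so that selecting an \emph{unbalanced} $S$ is strictly cheaper for the decision maker unless $S$ is balanced — e.g.\ give each item a base uncertain cost $\underline c_i$ that is large and let the budget $\Gamma$ interact so that the adversary concentrates the budget on $S$, and balancedness is exactly the condition under which the adversary is ``saturated''. I would therefore: (1) state the construction with explicit $n, p, \underline c, d, \Gamma$; (2) prove that a YES instance of {\sc ECP} yields a solution of objective value $\le B$ for a chosen bound $B$ by exhibiting the balanced $S$ and verifying the adversary cannot exceed $B$ on it; (3) prove the converse by a contrapositive/exchange argument: if no balanced half exists, then for every size-$h$ set $S$ the adversary's optimal $\sum_{i\in T}d_i$ strictly exceeds $B - \underline c(S)$, using the even-multiples normalization to control the granularity of achievable subset sums near the budget $\Gamma$. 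Step (3) — controlling the adversary's best subset sum over an arbitrary $S$ and showing the threshold genuinely separates YES from NO — is the crux and the place where the even-numbers assumption and the precise choice of $\Gamma$ do the work; the NP-hardness of {\sc Dis-Vol}$(p)$ then follows immediately since {\sc Adv}$_{x,y}(p)$ is its inner problem with $x,y$ fixed.
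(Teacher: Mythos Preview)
Your proposal has two genuine gaps.

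\textbf{First, you never find a construction that works.} You correctly diagnose that the na\"ive choice $d_i=a_i$, $\Gamma=A$, $p=h$ fails to separate YES from NO (indeed the adversary's optimum is always at most $A$, so the threshold question is vacuous), and your attempted repairs via scaling also fail, as you note. The idea you are missing is to \emph{shift} the $d_i$ by a large constant: set $d_i = 2A + a_i$ and $\Gamma = 2Ah + A$, and take $p = 2h$ with $y \equiv 1$. Then picking $h{+}1$ or more items already exceeds $\Gamma$ (since $(h{+}1)\cdot 2A > \Gamma$), while picking $h{-}1$ or fewer cannot reach $\Gamma$ (since $(h{-}1)\cdot 2A + \sum a_i \le 2Ah$). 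Hence the adversary attains exactly $\Gamma$ if and only if some size-$h$ subset of $H$ has $a$-sum exactly $A$. This is the cardinality-forcing trick that turns the adversary's subset-sum into an {\sc ECP} instance; without it your ``step (3)'' has no content.

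\textbf{Second, and more seriously, your final sentence is a logical error.} NP-hardness of the inner evaluation problem {\sc Adv}$_{x,y}(p)$ does \emph{not} imply NP-hardness of the outer $\min$--$\max$ problem {\sc Dis-Vol}$(p)$: an optimization problem can be easy even when evaluating its objective at an arbitrary point is hard. The paper handles this by a separate construction: to the $2h$ ``ECP items'' above it adjoins $2h$ ``reference items'' with $d_i = \Gamma - 1$, sets $p = 2h$, and keeps $C_i$ prohibitively large. The decision maker now has essentially two relevant choices for $Y^0$: the ECP block, whose adversarial cost is $\Gamma$ if {\sc ECP} is a YES instance and at most $\Gamma - 2$ otherwise (using the even-$a_i$ assumption), or the reference block, whose adversarial cost is exactly $\Gamma - 1$; any mixed choice costs at least $\Gamma - 1$. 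Thus the optimum of {\sc Dis-Vol}$(p)$ is $\le \Gamma - 2$ iff {\sc ECP} has no solution, and comparing the optimum to this threshold decides {\sc ECP}. Your proposal contains neither this reference-block idea nor any substitute for it.
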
 

\begin{proof}
Consider the adversarial sub-problem {\sc Adv}$_{x,y}(p)$. For any instance of {\sc ECP}, construct an instance of the decision version of {\sc Adv}$_{x,y}(p)$, in which  $N=H$, $n=p=2h$, $x=(0,\ldots,0)$, $y=(1,\ldots,1)$, $\Gamma=2Ah+A$, $C_i=\Gamma+1$, $\underline c_i=0$, $d_i=2A+a_i$, $i\in H$, and objective value $\sum_{i\in N}\delta_i\ge \Gamma$. Then, $\sum_{i\in N}\delta_i=\Gamma$ as $\sum_{i\in N}\delta_i\le \Gamma$.
It is not difficult to see that the instance of {\sc ECP} has a solution if and only if the corresponding instance of {\sc Adv}$_{x,y}(p)$ has a solution. Indeed, the necessary condition ($\Rightarrow$) is obvious. Let us prove the sufficient condition ($\Leftarrow$). Suppose that there exists a subset $S'\subseteq N$ such as $\delta_i=d_i$, $i\in S'$, and $\delta_i=0$, $i\in N\setminus S'$, which is a solution for the instance of {\sc Adv}$_{x,y}(p)$. Then, $\sum_{i\in N}\delta_i=\Gamma$ implies the following equality $2A|S'| + \sum_{i\in S'}a_i=2Ah+A$, which is equivalent to $\sum_{i\in S'}a_i= (2h+1-2|S'|)A$. The latter indicates that $|S'|$ can not be greater than $h$, as otherwise $\sum_{i\in S'}a_i$ is negative, which is impossible. It also infers that $|S'|$ can not be less than $h$, since otherwise $\sum_{i\in S'}a_i\ge 3A$, which is also impossible, since $\sum_{i\in N}a_i=2A$. Hence, $|S'|=h$ and $\sum_{i\in S'}a_i=A$, and $S'$ is also a solution for the corresponding instance of {\sc ECP}.

Note that the above proof demonstrates that the problem to calculate the objective function value of {\sc Dis-Vol}($p$) for a given solution $(x,y)$ of the decision maker is NP-hard. Let us prove that the problem to find an optimal solution for the decision maker in the problem {\sc Dis-Vol}($p$) is also NP-hard. Our approach is to demonstrate that the optimal solution of the decision maker can be limited to two solutions $(x^{(1)},y^{(1)})$ and $(x^{(2)},y^{(2)})$ with distinct total costs $F_1$ and $F_2$, respectively, such that if $F_1<F_2$, then {\sc ECP} has no solution and if $F_1>F_2$, then {\sc ECP} has a solution. Since solving {\sc Dis-Vol}($p$) determines which of these two exclusive inequalities is realized, {\sc Dis-Vol}($p$) is NP-hard.

For any instance of {\sc ECP}, construct the following instance of {\sc Dis-Vol}($p$): $N=\{1,\ldots,4h\}$, $n=4h$, $p=2h$, 
$\Gamma=2Ah+A$, $C_i=\Gamma+1$, $\underline{c}_i=0$, $d_i=2A+a_i$, $i=1,\ldots,2h$, and $C_i=\Gamma+1$, $\underline{c}_i=0$, $d_i=2Ah+A-1$, $i=2h+1,\ldots,4h$. Define  $x^{(1)}=x^{(2)}=(0,\ldots,0)$, $y^{(1)}=(1,\ldots,1,0,\ldots,0)$ and $y^{(2)}=(0,\ldots,0,1,\ldots,1)$, where there are $2h$ units and $2h$ zeros in $y^{(1)}$ and $y^{(2)}$. 
Observe that any solution $(x,y)$ other than the above two solutions has a total cost of at least $2Ah+A-1$, because either there exists at least one index $i\ge 2h+1$ for which $y_i=1$ in such a solution, and the adversary can always set $\delta_i=d_i=2Ah+A-1$, or there exists an index $i$ such that $x_i=1$, which implies $C_i\cdot x_i = \Gamma +1 = 2Ah+A+1$. Introduce set $\Delta=\{\delta: \delta_i\in \{0,d_i\},i\in N,\sum_{i\in N}\delta_i\le\Gamma\}$. 

Assume that in the instance of {\sc ECP} the answer is ``no'' (no subset $S$ exists). In this case, the minimal total costs for the solutions $(x^{(1)},y^{(1)})$ and $(x^{(2)},y^{(2)})$ are $F_1=\max\limits_{\delta\in\Delta}\sum_{i\in N}\Big(C_ix^{(1)}_i+(\underline{c}_i+\delta_i)y^{(1)}_i\Big)\le 2Ah+A-2$ and $F_2=\max\limits_{\delta\in\Delta}\sum_{i\in N}\Big(C_ix^{(2)}_i+(\underline{c}_i+\delta_i)y^{(2)}_i\Big)=2Ah+A-1$ because all numbers $a_i$ are multiples of 2. We have $F_1<F_2$ as required.
Now assume that in the instance of {\sc ECP} the answer is ``yes'' (subset $S$ exists).  In this case, $F_1=2Ah+A$ and $F_2=2Ah+A-1$ and 
$F_1>F_2$ as required. Noting that $F_1$ and $F_2$ are the smallest possible objective function values completes the proof. 
\end{proof}

Let us prove that  {\sc Dis-Vol}($p$) is $\Sigma_2^p$-hard. It is convenient to formulate a decision version of this problem, denoted as {\sc D-Dis-Vol}($p$), in the set-theoretic terminology. 

{\sc D-Dis-Vol}($p$): Given set $N=\{1,\ldots,n\}$, triples of numbers $(C_i, \underline{c}_i, d_i)$, $i\in N$, and numbers $\Gamma$, $p$ and $V$, do there exist {\it feasible} sets $X^0\subseteq N$ and $Y^0\subseteq N$ satisfying  $|X^0| + |Y^0| = p$, $X^0\cap Y^0 = \varnothing$ such that for all {\it feasible} vectors $\delta=(\delta_1,\ldots,\delta_n)$ satisfying
$\delta_i\in\{0, d_i\}$, $i\in N$, and $\sum_{i\in N}\delta_i\le\Gamma$, relation \begin{equation} \label{Sigmapp} \sum_{i\in X^0} C_i + \sum_{i\in Y^0} (\underline{c}_i + \delta_i) \le V
\end{equation} is satisfied? 


\begin{theorem}  \label{PRUD1NPrS}
  The decision problem {\sc D-Dis-Vol}$(p)$ is $\Sigma_2^p$-complete.
\end{theorem}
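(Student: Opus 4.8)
The plan is to establish membership in $\Sigma_2^p$ first, and then $\Sigma_2^p$-hardness by a reduction from a known $\Sigma_2^p$-complete problem. Membership is routine: the problem {\sc D-Dis-Vol}$(p)$ asks whether there exist feasible sets $X^0, Y^0$ (the existential choice, of polynomial size) such that for all feasible adversarial vectors $\delta$ (the universal choice, also of polynomial size, since each $\delta_i \in \{0, d_i\}$) the linear inequality (\ref{Sigmapp}) together with $\sum_{i\in N}\delta_i \le \Gamma$ is satisfied. This is literally a $\Sigma_2^p$ sentence $\exists\,\text{poly}\;\forall\,\text{poly}\;[\text{poly-time check}]$, so {\sc D-Dis-Vol}$(p)$ $\in \Sigma_2^p$. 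I would spend one or two sentences making this precise, noting that the inner predicate (``$\delta$ violates the budget, or inequality (\ref{Sigmapp}) holds'') is polynomial-time checkable.

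For hardness, I would reduce from a canonical $\Sigma_2^p$-complete problem. A natural candidate is a quantified subset-sum / quantified partition variant, or $\exists\forall$-3SAT; but given the structure already set up in Theorem~\ref{PRUD1NPr2}, the cleanest route is to use a quantified version of {\sc Equal Cardinality Partition} or of subset-sum, where the first player picks part of an assignment and the second (adversary) completes it, and we ask whether for the existential player's choice, all adversarial completions keep a sum below a threshold. Concretely: given the $\exists\forall$ instance, let the $X^0$ component encode the existential player's selection, let $Y^0$ encode which ``universally controlled'' items are in play, and design triples $(C_i, \underline c_i, d_i)$ and $\Gamma, V$ so that (i) the fixed costs $C_i$ force $X^0$ to be exactly the set encoding a legal existential choice (large penalties otherwise, as in the previous proof), and (ii) the adversary's choice of $\delta$ on the $Y^0$-items under the budget $\sum \delta_i \le \Gamma$ simulates the universal player's completion, with (\ref{Sigmapp}) $\le V$ holding for all such $\delta$ exactly when the quantified formula is true. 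The budget constraint $\sum_{i\in N}\delta_i \le \Gamma$ is the key gadget: it both restricts the adversary (so not every completion is available) and, via arithmetic like the multiples-of-$2$ trick in Theorem~\ref{PRUD1NPr2}, lets us separate the ``formula satisfied'' case from the ``violated'' case by a strict threshold.

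The main obstacle I anticipate is getting the adversary's feasible set of $\delta$-vectors to correspond \emph{exactly} to the universal player's legal moves while the single scalar budget $\Gamma$ is the only constraint available on the adversary's side. In an $\exists\forall$-formula the universal player's variables are unconstrained booleans, so I would either (a) use a $\Sigma_2^p$-complete problem whose universal part already has a cardinality/sum structure (e.g. a quantified variant of partition where the adversary must pick exactly $h$ of $2h$ items), matching the $\sum\delta_i \le \Gamma$ budget directly after the multiples-of-$2$ normalization; or (b) pad with ``free'' items of cost $d_i$ small enough that they never help the adversary, so that the effective constraint on the meaningful $\delta_i$'s is just the budget. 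Option (a) seems more robust. Beyond that, I would need to verify polynomial size of the constructed numbers (they are sums of the input $a_i$'s scaled by polynomials in $h$, hence polynomially bounded in bit-length) and double-check the $\forall$-direction: that when the formula is false — i.e. some adversarial completion violates it — the violating $\delta$ is genuinely feasible for the budget, which is exactly the place where the careful choice of $\Gamma$ and the slack terms matters. Combining membership with this reduction yields $\Sigma_2^p$-completeness of {\sc D-Dis-Vol}$(p)$.
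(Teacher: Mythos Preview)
Your membership argument is fine and matches the paper. The hardness part, however, remains a plan rather than a proof: you never commit to a concrete $\Sigma_2^p$-complete source problem, and the obstacle you yourself flag---making the adversary's budget-constrained choice of $\delta$ coincide exactly with the universal player's legal moves---is precisely the crux. ``A quantified variant of partition'' is not a standard $\Sigma_2^p$-complete problem off the shelf, so you would first have to prove it $\Sigma_2^p$-complete, which is the same difficulty pushed one level back. Your role assignment is also slightly off: in a clean reduction $X^0$ carries no information (large $C_i$ force $X^0=\varnothing$), the existential player's choice is encoded by the \emph{complement} $N\setminus Y^0$, and the adversary's $\delta$ encodes the universal choice.

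The paper resolves this by reducing from the \emph{Combinatorial Interdiction} version of \textsc{Knapsack with Prices Equal to Weights} ({\sc CI-KPEW}), shown $\Sigma_2^p$-complete via the general framework of Gr\"une and Wulf. In {\sc CI-KPEW} the existential player picks an interdiction set $B'\subseteq B$ of size $t$, and the universal player picks any $S\subseteq H$ with $L\le\sum_{i\in S}w_i\le U$; the question is whether $B'$ can always hit $S$. This maps directly: set $p=h-t$, $\Gamma=U$, $C_i=V+1$ (so $X^0=\varnothing$), $d_i=w_i$, and use $\underline c_i=L$ for $i\in B$ and $0$ otherwise, with $V=(|B|-t)L+L-1$. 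Then $B'=N\setminus Y^0$, the adversary's feasible $\delta$ with $\sum\delta_i\le\Gamma$ is exactly a set $S$ with $\sum w_i\le U$, and the threshold $V$ separates $\sum_{i\in S}w_i<L$ from $\sum_{i\in S}w_i\ge L$. No multiples-of-$2$ trick is needed; the separation comes for free from the $L$-offsets on $B$. This is your option~(a) made concrete---the missing ingredient is knowing that such a $\Sigma_2^p$-complete interdiction problem exists in the literature.
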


\begin{proof} We first note that {\sc D-Dis-Vol}$(p)$ is of the form ``{\it whether there exist feasible sets $X^0$ and $Y^0$ such that for any feasible vector $\delta$ property $P(X^0,Y^0,\delta)$ is satisfied''}. This formulation follows the terminology and notation of polynomial-time hierarchy (see, {\it e.g.}, Stockmeyer \cite{Stockmeyer1976}). Since the property in~(\ref{Sigmapp}) may be verified in polynomial time for given $X^0,Y^0$ and $\delta$, {\sc D-Dis-Vol}$(p)$ is in the class $\Sigma_2^p$. To prove $\Sigma_2^p$-completeness, consider the following $\Sigma_2^p$-complete problem: {\sc Combinatorial Interdiction (CI)} counterpart of {\sc Knapsack with Prices Equal to Weights (KPEW)} (see, {\it e.g.}, Gr\"{u}ne and  Wulf~\cite{Grune2023}), which we abbreviate as {\sc CI-KPEW}.

{\sc CI-KPEW}: Given positive integer numbers $U$, $L$, $h$, $t$, positive integer vector $(w_1,\ldots,w_h)$ and set $B\subseteq H=\{1,\ldots,h\}$, $t\le |B|$, does there exist a set $B'\subseteq B$ with $|B'|\le t$ such that for all $S\subseteq H$ relation $L\leq \sum_{i\in S} w_i \leq U$ implies $S\cap B'\neq \varnothing$? Assume without loss of generality that relation $|B'|\le t$ is replaced by $|B'|=t$ because if $S\cap B'\neq \varnothing$ for $B'\subseteq B$ then $S\cap B''\neq \varnothing$ for any $B''$ such that $B'\subset B''\subseteq B$. 

Gr\"une and  Wulf~\cite{Grune2023} study {\it subset search problems (SSPs)}. They introduce a class of {\it SSP-NP-complete} problems and prove that problem {\sc KPEW} is SSP-NP-complete (see p. 42 in \cite{Grune2023}). They also establish a class of {\sc CI} problems that are more complex counterparts of the traditional combinatorial problems, and prove that if a problem is SSP-NP-complete, then its {\sc CI} counterpart is $\Sigma_2^p$-complete (see Theorem 14 in \cite{Grune2023}). {\sc CI-KPEW} is the {\sc CI} counterpart of the SSP-NP-complete problem {\sc KPEW} according to Definition 11 in \cite{Grune2023}. It follows that {\sc CI-KPEW} is $\Sigma_2^p$-complete. 

 The main idea of our reduction of {\sc CI-KPEW} to {\sc D-Dis-Vol}$(p)$ is that the set $B'$ in {\sc CI-KPEW} corresponds to the set $N\setminus Y^0$ in {\sc D-Dis-Vol}$(p)$. 
 For any instance of {\sc CI-KPEW}, construct the following instance of {\sc D-Dis-Vol}$(p)$:  $N=H$, $n=h$, $\Gamma = U$, $V = (|B|-t)L+L-1$, $p=h-t$, $C_i=V+1$, $i\in N$, $\underline{c}_i=0$ if  
  $i\notin B$, $\underline{c}_i =L$ if $i\in B$
  and $d_i = w_i$, $i\in N$.

Consider the instance of {\sc CI-KPEW} and assume that it has a solution: there exists $B'\subseteq B$, $|B'|=t$, such that for all $S\subseteq N$ relation $L\leq \sum_{i\in S} w_i \le U$ implies $S\cap B'\neq \varnothing$. For the corresponding instance of {\sc D-Dis-Vol}$(p)$, define $X^0=\varnothing$ and $Y^0=N\setminus B'$, implying $|X^0| + |Y^0| = n-t = p$. 
  For any $\delta$ such that $\delta_i \in \{0, d_i\}$, $i\in N$, and $\sum_{i\in Y^0} \delta_i \leq \Gamma = U$, define $S(\delta)=\{i\in Y^0 :\delta_i=d_i\}$. Since $S(\delta)\subseteq Y^0$, we have $S(\delta)\cap B' = S(\delta)\cap (N\setminus Y^0) = \varnothing$. Note that relation  $L\le \sum_{i\in S(\delta)}w_i$ would contradict the assumption that {\sc CI-KPEW} has a solution, because $S(\delta)$ is one of the subsets for which  $S(\delta)\cap B'\neq \varnothing$ has to be satisfied in this case. Therefore, $\sum_{i\in S(\delta)}w_i\le L-1$ and	
  $$\sum_{i\in X^0} C_i + \sum_{i\in Y^0} (\underline{c}_i + \delta_i) = (|B| - t)L + \sum_{i\in S(\delta)} d_i \le (|B| - t)L + L - 1 = V.$$
 We deduce that the instance of {\sc D-Dis-Vol}$(p)$ has a solution.

Conversely, consider the instance of {\sc D-Dis-Vol}$(p)$ and assume that it has a solution: there exist sets $X^0\subseteq N=H$ and $Y^0\subseteq N=H$ satisfying  $|X^0|+|Y^0|=p=h-t$ and $X^0\cap Y^0 = \varnothing$ such that for all vectors $\delta=(\delta_1,\ldots,\delta_n)$ satisfying
$\delta_i\in\{0, w_i\}$, $i\in N$, and $\sum_{i\in Y^0}\delta_i\le \Gamma=U$, relation 
\begin{equation} \label{Sigmapp2}  \sum_{i\in X^0} C_i + \sum_{i\in Y^0\setminus B}\delta_i+ \sum_{i\in Y^0\cap B}(L+\delta_i)\le V=(|B|-t)L+L-1 \end{equation} holds.  Remark that $X^0=\varnothing$ because otherwise $C_i=V+1$ for some $i$ in left-hand side of inequality (\ref{Sigmapp2}) would contribute to its violation. Hence, $|Y^0|=p=h-t$. 
Besides, from $|B\cup Y^0|\le h$ and $|B\cap Y^0|=|B|+|Y^0|-|B\cup Y^0|$, it follows that $|B\cap Y^0|\ge |B|+(h-t)-h= |B|-t$. If $|B\cap Y^0|>|B|-t$, then at least $(|B|-t+1)$ number of $\underline{c}_i = L$, $i\in B$, in the left-hand side of inequality (\ref{Sigmapp2}) would contribute to its violation. Therefore, $|B\cap Y^0|=|B|-t$. 

Equalities $|Y^0|=n-t$ and $|B\cap Y^0|=|B|-t$ imply $|H\setminus Y^0|=|B\setminus Y^0|=t$. Therefore, $H=B\cup Y^0$ and $H\setminus Y^0=B\setminus Y^0$. Define $B'(Y^0)=H\setminus Y^0=B\setminus Y^0$. For any $S\subseteq H$ such that $S\cap B'(Y^0)= \varnothing$ and $\sum_{i\in S} w_i \leq U$, define vector $\delta^{(S)}$ such that $\delta^{(S)}_i=d_i=w_i$ if $i\in S$ and $\delta^{(S)}_i=0$ if $i\not\in S$. Since $(X^0,Y^0)$ is a solution of {\sc D-Dis-Vol}$(p)$, the following has to be satisfied for $\delta^{(S)}$:
  $$\sum_{i\in X^0} C_i + \sum_{i\in Y^0} (\underline{c}_i + \delta^{(S)}_i)=(|B|-t)L+\sum_{i\in S}w_i\le V=(|B|-t)L+L-1,$$
  which implies $\sum_{i\in S} w_i<L$. 
    
    In other words, for set $B'(Y^0)$, we have $|B'(Y^0)|=t$ and the following expression holds : 
    $$\forall S\subseteq H\ \ \left(S\cap B'(Y^0)=\varnothing\ \ {\rm and}\ \ \sum_{i\in S} w_i\le U\ \Rightarrow\ \ \sum_{i\in S} w_i<L \right).$$
    The latter implies
    $$\forall S\subseteq H\ \ \left(L \le \sum_{i\in S} w_i\le U\ \Rightarrow\ \ S\cap B'(Y^0)\neq\varnothing \right),$$
    which means that $B'(Y^0)$ is a solution for {\sc CI-KPEW}.
\end{proof}

Consider a problem differing from {\sc Dis-Vol}($p$) in that all $n$ costs are uncertain and $p$ items have to be selected, which we denote as {\sc All}($p$): $$\min_{S\subseteq N}\max\limits_{\delta_i\in \{0,d_i\},i\in S}\Big\{\sum_{i\in S}(\underline{c}_i+\delta_i): \sum_{i\in S}\delta_i\le\Gamma\Big\}.$$ In this problem, a solution of the decision maker can be represented as a 0-1 vector $y$ such that $y_i=1$ if and only if item $i\in N$ is selected. Denote the adversarial sub-problem of this problem for a fixed $y$ as {\sc Adv-All}$_y(p)$.

\begin{corollary} \label{CorAll} The problems {\sc Adv-All}$_y(p)$ and {\sc All}($p$) are NP-hard and the problem {\sc All}($p$) is $\Sigma_2^p$-hard  even if $\underline{c}_i=0$, $i\in N$.  
\end{corollary}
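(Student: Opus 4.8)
The plan is to observe that the two reductions already carried out in the proofs of Theorems~\ref{PRUD1NPr2} and~\ref{PRUD1NPrS} never actually use the fixed-cost option, so up to a cosmetic change they are reductions to the ``all uncertain'' problems; only the elimination of the nonzero $\underline c_i$'s needs extra care. For {\sc Adv-All}$_y(p)$ I would reuse verbatim the instance of {\sc Adv}$_{x,y}(p)$ built in Theorem~\ref{PRUD1NPr2}: there $x=(0,\ldots,0)$, $y=(1,\ldots,1)$ and $\underline c_i=0$, so the inner maximisation over $\delta$ is exactly the one defining {\sc Adv-All}$_y(p)$ with $p=n$ and every item selected, and the equivalence with {\sc ECP} is already proved. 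For {\sc All}($p$) I would take the {\sc Dis-Vol}($p$)-instance of Theorem~\ref{PRUD1NPr2}, in which every fixed cost equals the unacceptable value $\Gamma+1$, and simply delete the fixed-cost component: $N$ becomes a set of $4h$ items each carrying only the uncertain cost $\underline c_i+\delta_i=\delta_i$ with $\underline c_i=0$, out of which $p=2h$ items must be selected. No relevant solution of the decision maker ever used a fixed cost, so the two candidate selections $\{1,\ldots,2h\}$ and $\{2h+1,\ldots,4h\}$ and the bound ``every other selection costs at least $2Ah+A-1$'' survive unchanged, and the same case analysis shows that {\sc ECP} answers ``yes'' iff the optimum of this {\sc All}($p$)-instance equals $2Ah+A-1$. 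This also yields the $\underline c_i=0$ refinement.

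For the $\Sigma_2^p$-hardness of {\sc All}($p$) with $\underline c_i=0$ I would mimic the reduction from {\sc CI-KPEW} in Theorem~\ref{PRUD1NPrS}, stripped down. Two features of that instance obstruct its use here: the huge fixed costs $C_i=V+1$, which served only to force $X^0=\varnothing$, and the values $\underline c_i=L$ for $i\in B$, which served only to force $Y^0\supseteq H\setminus B$ (equivalently $B'\subseteq B$). In the ``all uncertain'' setting the first disappears automatically, and the second becomes superfluous once the whole ground set is interdictable, for then $B'=N\setminus Y^0$ ranges over all $t$-subsets of $H$ and the counting argument on $|B\cap Y^0|$ is vacuous. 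Concretely, from an instance of {\sc CI-KPEW} with $B=H$ I would set $N=H$, $n=h$, $p=h-t$, $d_i=w_i$, $\underline c_i=0$, $\Gamma=U$ and $V=L-1$. For a selection $S$ of $p$ items the adversarial value is $\max\{\sum_{i\in T}w_i:T\subseteq S,\ \sum_{i\in T}w_i\le U\}$, which is at most $V=L-1$ exactly when no subset of $S$ has weight in $[L,U]$; writing $B'=N\setminus S$ (a $t$-subset of $H$), this is precisely the assertion that every $S''\subseteq H$ with $\sum_{i\in S''}w_i\in[L,U]$ meets $B'$, and conversely such a $B'$ gives back the selection $S=N\setminus B'$. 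Hence the {\sc All}($p$)-instance is a ``yes''-instance iff the {\sc CI-KPEW}-instance is; membership in $\Sigma_2^p$ is immediate, as in Theorem~\ref{PRUD1NPrS}.

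The step I expect to be the main obstacle is the last one: confirming that {\sc CI-KPEW} is $\Sigma_2^p$-complete already in the form in which every ground-set element is interdictable. I would argue that this is exactly the combinatorial-interdiction counterpart of the SSP-NP-complete problem {\sc KPEW} in the sense of Definition~11 of Gr\"une and Wulf~\cite{Grune2023}, where the interdictor may remove any ground-set element, so their Theorem~14 applies directly. If one insists instead on a general interdiction set $B\subsetneq H$, there seems to be no way around reinstating $\underline c_i=L$ on $B$ and thus giving up $\underline c\equiv 0$: in {\sc All}($p$), removing an item from the selected set can only decrease the adversarial value, so the construction has no lever with which to penalise the decision maker for ``interdicting'' an element outside $B$. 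Everything else is routine bookkeeping inherited from Theorems~\ref{PRUD1NPr2} and~\ref{PRUD1NPrS}.
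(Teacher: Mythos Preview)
The paper states Corollary~\ref{CorAll} without any proof, treating it as an immediate consequence of Theorems~\ref{PRUD1NPr2} and~\ref{PRUD1NPrS}. Your argument is therefore not so much a different route as the only route actually written down, and for the NP-hardness claims it is exactly what the paper's constructions implicitly give: both instances in Theorem~\ref{PRUD1NPr2} already have $\underline c_i=0$ and fixed costs large enough to kill every $x_i=1$, so stripping the fixed-cost component yields {\sc Adv-All}$_y(p)$ and {\sc All}$(p)$ verbatim. That part is fine and matches the paper's evident intent.

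Where you add genuine content is in the $\Sigma_2^p$-hardness with $\underline c_i=0$. The reduction in Theorem~\ref{PRUD1NPrS} uses $\underline c_i=L$ on $B$ precisely to confine the ``interdicted'' set $N\setminus Y^0$ to $B$; dropping that term while keeping a proper $B\subsetneq H$ would break the converse direction, as you observe. Your fix---specialising {\sc CI-KPEW} to $B=H$ and then taking $\underline c_i=0$, $V=L-1$---is clean and correct, and the equivalence you sketch between selections $S$ and interdiction sets $B'=N\setminus S$ goes through. The only residual obligation is the one you flag yourself: that the $\Sigma_2^p$-completeness theorem of Gr\"une and Wulf already delivers hardness for the version in which every element is interdictable. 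You are right that their Definition~11 and Theorem~14 are stated with the full ground set blockable, so the restriction $B=H$ is not a restriction at all in their framework; the paper's more general $B$ is its own addition. Hence your worry resolves in your favour, and the proof is complete. The paper, by contrast, leaves this point entirely unaddressed.
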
 

We now pass to the hardness of the problem {\sc Dis-Vol}($p,k$).

\begin{theorem} \label{PRUD1NPrr} The problems {\sc Adv-All}$_{x,y}(p,k)$ and {\sc Dis-Vol}($p,k$) are NP-hard and the problem {\sc Dis-Vol}($p,k$) is $\Sigma_2^p$-hard even if $C_i=\underline c_i=0$, $i\in N$.  
\end{theorem}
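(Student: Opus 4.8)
The plan is to obtain all three assertions from Corollary~\ref{CorAll} by observing that, once $C_i=\underline c_i=0$ for every $i\in N$, problem {\sc Dis-Vol}$(p,k)$ degenerates to {\sc All}$(p)$. Concretely, for any feasible pair $(x,y)$ the objective equals
$$\max_{\delta_i\in\{0,d_i\},\ i\in N}\Big\{\sum_{i\in N}\delta_i y_i:\ \sum_{i\in N}\delta_i\le\Gamma\Big\},$$
and an optimal adversary always sets $\delta_i=0$ whenever $y_i=0$ (such a component does not raise $\sum_i\delta_i y_i$ but only consumes budget); hence the objective depends only on the set $Y=\{i:y_i=1\}$ and equals $\max\{\sum_{i\in Y}\delta_i:\delta_i\in\{0,d_i\},\ \sum_{i\in Y}\delta_i\le\Gamma\}$, i.e.\ exactly the value that $S=Y$ attains in {\sc All}$(p)$. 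Moreover every $Y$ with $|Y|=p$ extends to a feasible pair, e.g.\ by taking $x=y$, which satisfies $\sum_i x_i=p$ and $\sum_i x_i y_i=p\ge p-k$ for every admissible $k$. Consequently, minimizing over all feasible $(x,y)$ is the same as minimizing over all $p$-subsets $Y$: the $x$-selection and the parameter $k$ are inert.

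The reduction is then immediate. Given an instance of {\sc All}$(p)$ with $\underline c\equiv 0$, output the instance of {\sc Dis-Vol}$(p,k)$ with the same $n$, $p$, $d$ and $\Gamma$, with $C_i=\underline c_i=0$ for all $i$, and with $k$ chosen arbitrarily, say $k=p$. By the observation above the two problems have the same optimal value, and their decision versions are equivalent with matching $\exists\forall$ structure: ``there is a feasible $(x,y)$ such that every feasible $\delta$ gives value at most $V$'' holds for {\sc Dis-Vol}$(p,k)$ iff ``there is a $p$-subset $Y$ such that every feasible $\delta$ gives $\sum_{i\in Y}\delta_i\le V$'' holds for {\sc All}$(p)$. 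Since Corollary~\ref{CorAll} states that {\sc All}$(p)$ is NP-hard and $\Sigma_2^p$-hard already under $\underline c\equiv 0$, it follows that {\sc Dis-Vol}$(p,k)$ is NP-hard and $\Sigma_2^p$-hard already under $C_i=\underline c_i=0$.

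For the adversarial sub-problem {\sc Adv}$_{x,y}(p,k)$ (written {\sc Adv-All}$_{x,y}(p,k)$ in the statement), I would take $n=p$, so that $y=(1,\dots,1)$ and, with $x=y$, the feasibility conditions of {\sc Dis-Vol}$(p,k)$ hold for every $k$; at this $(x,y)$ the adversarial value is precisely $\max\{\sum_{i\in N}\delta_i:\delta_i\in\{0,d_i\},\ \sum_{i\in N}\delta_i\le\Gamma\}$, which is {\sc Adv-All}$_y(p)$ at $y=(1,\dots,1)$. Plugging in the {\sc ECP}-based data from the proof of Theorem~\ref{PRUD1NPr2} ($d_i=2A+a_i$, $\Gamma=2Ah+A$, $C_i=\underline c_i=0$) makes the adversarial value equal to $\Gamma$ iff the {\sc ECP} instance is a ``yes'', so computing it is NP-hard; equivalently this is the NP-hardness of {\sc Adv-All}$_y(p)$ asserted in Corollary~\ref{CorAll}.

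I do not anticipate a genuine obstacle: the whole content is the recognition that zeroing the fixed and nominal costs turns {\sc Dis-Vol}$(p,k)$ into {\sc All}$(p)$. The single point needing care --- and the reason the statement allows $C_i=\underline c_i=0$ rather than the ``$C_i=\infty$'' device used for {\sc Dis-Vol}$(p)$ --- is that in {\sc Dis-Vol}$(p,k)$ the constraint $\sum_i x_i=p$ is an \emph{equality}, so a prohibitively large $C_i$ would make every feasible solution infinitely expensive rather than merely forbidding fixed-cost items; setting $C_i=0$ instead neutralizes the $x$-part, which contributes $0$ and leaves the adversary's value on $Y$ unchanged. No membership argument is required since only hardness is claimed; if $\Sigma_2^p$-completeness of a suitable decision version were also wanted, membership in $\Sigma_2^p$ would follow exactly as in the proof of Theorem~\ref{PRUD1NPrS}.
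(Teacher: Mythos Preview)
Your proposal is correct and follows exactly the same approach as the paper's proof: observe that when $C_i=0$ the $x$-part contributes nothing to the objective, so {\sc Dis-Vol}$(p,k)$ and its adversarial sub-problem reduce to {\sc All}$(p)$ and {\sc Adv-All}$_y(p)$ respectively, whereupon Corollary~\ref{CorAll} applies. The paper's proof is a two-sentence version of what you wrote; your additional details---making any $p$-subset $Y$ feasible via $x=y$, noting that an optimal adversary sets $\delta_i=0$ on $y_i=0$, and explaining why $C_i=0$ rather than $C_i=\infty$ is the right device here given the equality constraint $\sum_i x_i=p$---are all correct and fill in points the paper leaves implicit.
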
 

\begin{proof} If $C_i=0$, $i\in N$, then the $x$-part of the decision does not contribute to the total cost, and the problems {\sc Adv}$_{x,y}(p,k)$ and  {\sc Dis-Vol}($p,k$) become equivalent to {\sc Adv-All}$_y(p)$ and {\sc All}($p$). In this case, Corollary \ref{CorAll} applies for them.   
\end{proof}

\subsubsection{Polynomial special cases.} 

 Following Graham et al. \cite{Graham1994}, note that the number of subsets of the set $N$ including exactly (resp. at most) $a$ elements is equal to $\binom{n}{a}\in O(n^a)$ (resp. $\sum_{i=0}^a\binom{n}{i}\in O(n^a)$) and the number of partitions of a set with $a$ elements into $r$, $r\le a$, disjoint subsets corresponds to the Stirling number of the second kind, noted as $a \brace r$, which is in $O(r^a)$.

Therefore, if $p$ is fixed, then the problem {\sc Dis-Vol}($p$) can be solved in $O(n^p)$ time in the following way. First, we need to enumerate all the subsets of cardinality $p$ of the set $N$, which can be done in $O(n^p)$ time. Then, for each of this subset, enumerate all its partitions into two disjoint subsets. The latter can be done in $O(2^p)$ time. Now, let $\mathcal{S}$ denote a collection of all partitions into two disjoint subsets for all subsets of cardinality $p$. It is not difficult to see that each partition from $\mathcal{S}$ can be viewed as a potential solution $(x,y)\in X\times Y$ for the problem {\sc Dis-Vol}($p$), where $x_i=1$ (resp. $y_i=1$) if and only if $i$ belongs to the first (resp. second) subset of the concerned partition. To simplify the following discussion, we assume that the collection $\mathcal{S}$ is now made up of potential solutions $(x,y)$ and not partitions. Finally, for each solution $(x,y)\in \mathcal{S}$, construct the set $\Delta^{(y)}$ of all possible vectors $\delta$ with respect to $y$ such that $\Delta^{(y)}=\{\delta: \delta_i\in\{0,d_i\} {\rm \ if \ } y_i=1 {\rm \ and \ } \delta_i=0 {\rm \ if \ } y_i=0\}$. The construction of the this set can also be done in $O(2^p)$ time at most, since $|\{i\in N : y_i=1\}|\le p$ for each $(x,y)\in\mathcal{S}$. Thus, an optimal solution $(x,y,\delta)$ of the problem {\sc Dis-Vol}($p$) can be found as follows :
\begin{equation}\label{polynomial}
\min_{(x,y)\in\mathcal{S}}\max_{\delta\in \Delta^{(y)}}\Big\{\sum_{i\in N}(C_ix_i+\delta_iy_i): \sum_{i\in N}\delta_iy_i\le\Gamma\Big\},
\end{equation}
which can be done in $O(n^p\cdot 2^p \cdot 2^p)$ time, which is equal to $O(n^p)$ time for a fixed $p$.

A similar approach can be used to solve the problem {\sc Dis-Vol}($p,k$) in $O(n^{2p})$ time. It can be done by at first constructing two families $\mathcal{X}$ and $\mathcal{Y}$ of all subsets of cardinality $p$ of the set $N$, which can be done in $O(n^p)$ time, and where each subset corresponds to a hypothetic potential solution $x$ or $y$, respectively. And then by computing in $O(n^{2p})$ time the Cartesian product $\mathcal{S}$ of these two families, just  excluding pairs $(x,y)$ such that the number of indices $i$, where $x_i=0$ and $y_i=1$, is greater than $k$. In other words, $\mathcal{S}=\{(x,y)\in \mathcal{X}\times \mathcal{Y}: \sum_{i\in N} x_iy_i\ge p-k\}$. For each pair $(x,y)\in\mathcal{S}$, the set $\Delta^{(y)}$ is constructed in $O(2^p)$ time in the same way as above. Finally, following expression \eqref{polynomial}, an optimal solution $(x,y,\delta)$ of the problem {\sc Dis-Vol}($p,k$) can be found in $O(n^{2p}\cdot 2^p)$ time, which is equal to $O(n^{2p})$ time for a fixed $p$.

Let the uncertainty budget $\Gamma$ be zero or non-restricting. Consider a special case of the problem {\sc Dis-Vol}($p$) with $\Gamma\in\{0,\infty\}$. Denote $m_i=\min\{C_i,\underline c_i+o_i\}$, where $o_i=0$ if $\Gamma=0$ and $o_i=d_i$ if $\Gamma=\infty$, $i\in N$. An optimal solution $(x,y,\delta)$ of the  problem {\sc Dis-Vol}($p$) with $\Gamma\in\{0,\infty\}$ can be found by the following $O(n)$ time algorithm: select $p$ smallest values from the set $\{m_i: i\in N\}$, and if  $m_i=C_i$ for the selected value then set $x_i=1$, else if $m_i=\underline c_i+o_i$ then set $y_i=1$ and $\delta_i=o_i$. The other components of $x$, $y$ and $\delta$ are equal to zero. 
The same algorithm for the  two-stage counterpart of the problem {\sc Dis-Vol}($p$) with $\Gamma=\infty$ is described by Kasperski and Zieli\'nski \cite{Kasperski2017} (p. 60, Theorem~4). The following statement is obvious.

\begin{observation} \label{Oextr} If $\Gamma\in\{0,\infty\}$, then problems {\sc Dis-Vol}($p$), {\sc Con-Vol}($p$), {\sc Dis-Car}($p$) and {\sc Con-Car}($p$) are equivalent, and problems {\sc Dis-Vol}($p,k$), {\sc Con-Vol}($p,k$), {\sc Dis-Car}($p,k$) and {\sc Con-Car}($p,k$) are equivalent. 
\end{observation}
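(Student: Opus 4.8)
The plan is to reduce every one of the four problems, for either extreme value of $\Gamma$, to one and the same deterministic minimization problem over $X\times Y$, exploiting the fact that the outer feasible set $X\times Y$ is by definition identical across the four $(p)$ problems and, separately, across the four $(p,k)$ problems, and does not involve $\delta$. Thus the whole argument amounts to computing the inner (adversarial) maximum in each of the four models when $\Gamma\in\{0,\infty\}$ and checking that it comes out the same.

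First I would dispose of the case $\Gamma=0$. In the two volume models the budget constraint $\sum_{i\in N}\delta_i\le 0$ together with $\delta\ge 0$ forces $\delta=0$; in the continuous cardinality model $|\{i\in N:\delta_i>0\}|\le 0$ again forces $\delta=0$; and in the discrete cardinality model $|\{i\in N:\delta_i=d_i\}|\le 0$ forces $\delta_i=0$ whenever $d_i>0$, while items with $d_i=0$ contribute nothing in any model. Hence in all four models the inner maximum equals $\underline c y$, so each problem becomes $\min_{(x,y)\in X\times Y}\{Cx+\underline c y\}$ with the common set $X\times Y$; consequently the four $(p)$ problems coincide, and likewise the four $(p,k)$ problems.

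Next I would treat $\Gamma=\infty$, understood as non-restricting (for instance $\Gamma\ge\sum_{i\in N}d_i$ for a volume budget and $\Gamma\ge n$ for a cardinality budget). In every model the budget constraint is then vacuous, and since $y$ is binary the adversary's objective is $(\underline c+\delta)y=\sum_{i:\,y_i=1}(\underline c_i+\delta_i)$, which, because each $d_i\ge 0$, is maximized term by term by taking $\delta_i=d_i$ for every $i$ with $y_i=1$; this choice is admissible both when $0\le\delta\le d$ and when $\delta_i\in\{0,d_i\}$. So in all four models the inner maximum equals $(\underline c+d)y$, and each problem becomes $\min_{(x,y)\in X\times Y}\{Cx+(\underline c+d)y\}$, again over a common $X\times Y$, which gives the asserted equivalences.

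I do not expect a genuine obstacle here; the statement is essentially definitional. The only points deserving a line of care are pinning down the intended meaning of $\Gamma=\infty$ and dismissing the degenerate items with $d_i=0$ in the discrete cardinality model when $\Gamma=0$. Everything else follows directly from the formulations in Section~\ref{SForm}.
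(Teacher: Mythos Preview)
Your proposal is correct and matches what the paper intends: the paper does not supply a proof at all, merely declaring the observation ``obvious,'' and your argument---reducing the inner adversarial maximum to $\underline c\,y$ when $\Gamma=0$ and to $(\underline c+d)y$ when $\Gamma$ is non-restricting, over the common feasible set $X\times Y$---is exactly the routine verification the authors have in mind. The two caveats you flag (interpreting $\Gamma=\infty$ and the degeneracy $d_i=0$ in {\sc Dis-Car}) are the only places where any care is needed, and you handle them appropriately.
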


The algorithmic results of this section are summarized in the following theorem.
\begin{theorem} \label{TSpec} If $p$ is fixed, then problems {\sc Dis-Vol}($p$) and {\sc Dis-Vol}($p,k$) can be solved in $O(n^p)$ and $O(n^{2p})$ time, respectively.  If $\Gamma\in\{0,\infty\}$, then problems {\sc Dis-Vol}($p$), {\sc Con-Vol}($p$), {\sc Dis-Car}($p$) and {\sc Con-Car}($p$) can be solved in $O(n)$ time, and problems {\sc Dis-Vol}($p,k$), {\sc Con-Vol}($p,k$), {\sc Dis-Car}($p,k$) and {\sc Con-Car}($p,k$) can be solved in $O(nk^2p)$ time.
\end{theorem}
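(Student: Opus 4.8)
The statement collects the constructions developed above in this subsection, so the plan is to check that each ingredient is correct and attains the claimed bound, invoking Observation~\ref{Oextr} and Theorem~\ref{TCntn} where convenient.

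For the case of fixed $p$, I would first justify the enumeration scheme for {\sc Dis-Vol}($p$). Every feasible pair $(x,y)\in X\times Y$ selects exactly $p$ indices and uses no index twice, so it occurs among the potential solutions of the collection $\mathcal{S}$ built from all $p$-subsets of $N$ together with all their bipartitions. For a fixed $(x,y)$ the adversary solves the maximization of $\sum_{i\in N}\delta_i y_i$ over the finite set $\Delta^{(y)}$ subject to $\sum_{i\in N}\delta_i y_i\le\Gamma$, which is precisely the inner maximum in~\eqref{polynomial}; hence~\eqref{polynomial} returns the optimum of {\sc Dis-Vol}($p$). For the running time the key observation is that $|\mathcal{S}|\in O(n^p\cdot 2^p)$, each $\Delta^{(y)}$ has $O(2^p)$ elements, and --- since $x$ and $y$ each have support of size at most $p$ --- evaluating $\sum_{i\in N}(C_ix_i+\delta_i y_i)$ and testing $\sum_{i\in N}\delta_i y_i\le\Gamma$ for a single pair costs $O(p)$ time, so the total is $O(n^p\cdot 2^p\cdot 2^p\cdot p)=O(n^p)$ for fixed $p$. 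The argument for {\sc Dis-Vol}($p,k$) is identical except that $x$ and $y$ range independently over $p$-subsets, so $\mathcal{S}$ is formed from the Cartesian product of the two families after discarding the pairs with $\sum_{i\in N}x_iy_i<p-k$ (again decidable in $O(p)$ time per pair); this yields $|\mathcal{S}|\in O(n^{2p})$ and a total of $O(n^{2p}\cdot 2^p\cdot p)=O(n^{2p})$ for fixed $p$.

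For the case $\Gamma\in\{0,\infty\}$ I would appeal to Observation~\ref{Oextr}: within the group consisting of {\sc Dis-Vol}($p$), {\sc Con-Vol}($p$), {\sc Dis-Car}($p$) and {\sc Con-Car}($p$) the problems are pairwise equivalent, and likewise within the $(p,k)$ group, so it suffices to bound one representative of each. For the first group I would verify the $O(n)$ algorithm stated above: if $\Gamma=0$ the adversary cannot raise any cost, while if $\Gamma=\infty$ (a non-restricting budget, $\Gamma\ge\sum_{i\in N}d_i$) it raises every selected uncertain cost to its maximum; in either case picking index $i$ contributes $C_i$ if taken with fixed cost and $\underline c_i+o_i$ if taken with uncertain cost, and since no index is used twice the problem decouples into selecting the $p$ smallest values among the $m_i=\min\{C_i,\underline c_i+o_i\}$ --- the deterministic $p$-selection problem, solved in $O(n)$ time by the median-finding method of Blum et al.~\cite{Blum1973} --- after which each chosen index is assigned to whichever option attains $m_i$; feasibility and optimality are immediate. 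For the $(p,k)$ group, Observation~\ref{Oextr} identifies {\sc Dis-Vol}($p,k$) and its companions with {\sc Dis-Car}($p,k$), which by Theorem~\ref{TCntn} is solvable in $O(nk^2p)$ time, giving the final bound.

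No step is genuinely hard, since every construction is already at hand; the points that need care are the bookkeeping showing that the $2^p$ factors and the $O(p)$-per-pair evaluation collapse into $O(n^p)$ and $O(n^{2p})$ for fixed $p$, and applying the equivalences of Observation~\ref{Oextr} in the direction that transfers the $O(nk^2p)$ bound established in Section~\ref{SecDP} for the cardinality-budget problem to the volume-budget $(p,k)$ problem with $\Gamma\in\{0,\infty\}$.
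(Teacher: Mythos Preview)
Your proposal is correct and follows essentially the same route as the paper: the brute-force enumeration over $\mathcal{S}$ and $\Delta^{(y)}$ for fixed $p$, the $m_i=\min\{C_i,\underline c_i+o_i\}$ selection argument for the $(p)$ problems with $\Gamma\in\{0,\infty\}$, and Observation~\ref{Oextr} for the equivalences. The one point you make explicit that the paper leaves implicit is the appeal to Theorem~\ref{TCntn} to obtain the $O(nk^2p)$ bound for the $(p,k)$ problems when $\Gamma\in\{0,\infty\}$; this is indeed the intended justification.
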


\subsection{Polynomial algorithms for continuous problems with volume budget} \label{SSVolC}
Consider the general problem {\sc Con-Vol} with an arbitrary set $X\times Y$ of feasible solutions $(x,y)$. For this problem, assume that some solution $(x,y)$ is fixed and the uncertainty budget is a variable parameter $\gamma\ge 0$. We denote the corresponding cost function as $Q_{x,y}(\gamma)=Cx+\max\limits_{0\le \delta_i\le d_i, i\in N}\Big\{(\underline c+\delta)y: \sum_{i\in N}\delta_i\le\gamma\Big\}$. Further discussion is supported by Figure~\ref{FLines}.

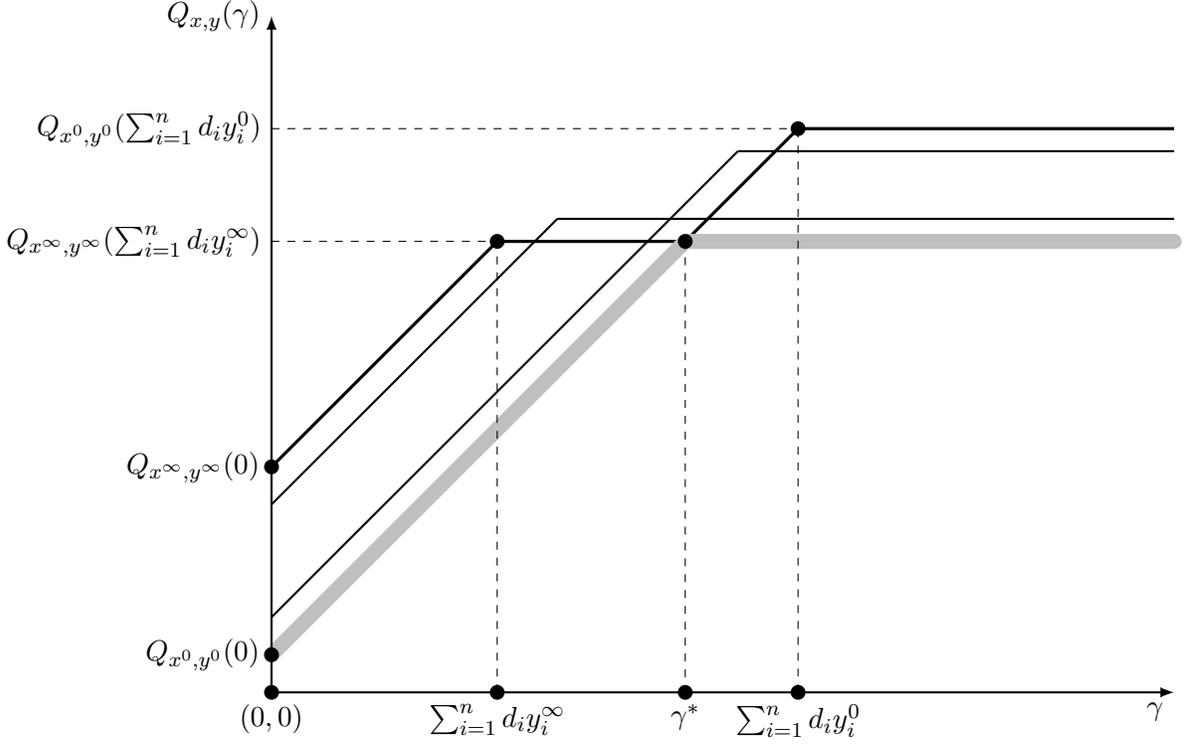
\begin{figure*}[!htb]
    \centering
\begin{tikzpicture}
 
    \draw[very thick](0,0.5)--(7,7.5);
    \draw[very thick](7,7.5)--(12,7.5);

    \draw[very thick](0,3)--(3,6);
    \draw[very thick](3,6)--(12,6);

    \draw[line width=0.2cm,color=gray!50,cap=round,join=round, opacity=0.5] (0,0.5)--(5.5,6)--(12,6);
      
    
    \draw[dashed](5.5,6)--(5.5,0);
    \draw[dashed](0,7.5)--(7,7.5);
    \draw[dashed](7,0)--(7,7.5);
    \draw[dashed](0,6)--(3,6);
    \draw[dashed](3,0)--(3,6);
    
    \draw[thick](0,2.5)--(3.8,6.3);
    \draw[thick](3.8,6.3)--(12,6.3);
		 
    \draw[thick](0,1.0)--(6.2,7.2);
    \draw[thick](6.2,7.2)--(12,7.2);

    \filldraw[ultra thick] (5.5,6) circle (2pt);
    \filldraw[ultra thick] (7,7.5) circle (2pt);
    \filldraw[ultra thick] (3,6) circle (2pt);

    \filldraw[ultra thick] (3,0) circle (2pt) node[below] {$\sum_{i=1}^n d_iy^\infty_i$};
    \filldraw[ultra thick] (0,0) circle (2pt) node[below] {$(0,0)$};
    \filldraw[ultra thick] (5.5,0) circle (2pt) node[below] {$\gamma^*$};
    \filldraw[ultra thick] (7,0) circle (2pt) node[below] {$\sum_{i=1}^n d_iy^0_i$};
		
    \filldraw[ultra thick] (0,0.5) circle (2pt) node[left] {$Q_{x^0,y^0}(0)$};
    \filldraw[ultra thick] (0,3) circle (2pt) node[left] {$Q_{x^\infty,y^\infty}(0)$};
    \node[left] at (0,7.5) {$Q_{x^0,y^0}(\sum_{i=1}^n d_iy^0_i)$};
    \node[left] at (0,6) {$Q_{x^\infty,y^\infty}(\sum_{i=1}^n d_iy^\infty_i)$};
   
    \draw[thick,->,>=latex](0,0)--(12,0) node[below left] {$\gamma$};
    \draw[thick,->,>=latex](0,0)--(0,9) node[left] {$Q_{x,y}(\gamma)$};  
\end{tikzpicture}
\caption{Functions $Q_{x,y}(\gamma)$.  Intersection point  $\gamma^*=Q_{x^\infty,y^\infty}(\sum_{i=1}^n d_iy_i^\infty)-Q_{x^0,y^0}(0)$.} \label{FLines} 
\end{figure*}

\begin{observation} \label{OLines1}For any $(x,y)\in X\times Y$, the function $Q_{x,y}(\gamma)$ is continuous piecewise linear with two linear segments for $\gamma\ge 0$. The first segment connects points $(0, Q_{x,y}(0))$ and $(\sum_{i=1}^n d_iy_i, Q_{x,y}(\sum_{i=1}^n d_iy_i))$ and has the slope coefficient 1. The second segment starts at $(\sum_{i=1}^n d_iy_i, Q_{x,y}(\sum_{i=1}^n d_iy_i))$ and is horizontal. \end{observation}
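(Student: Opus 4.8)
The plan is to prove Observation~\ref{OLines1} by directly analyzing the inner maximization in the definition of $Q_{x,y}(\gamma)$, treating $(x,y)$ as fixed and $\gamma \ge 0$ as the variable. First I would observe that the additive term $Cx + \underline c y$ is a constant independent of $\gamma$, so it suffices to understand the function $\gamma \mapsto g(\gamma) := \max\{\delta y : 0\le\delta\le d,\ \sum_{i\in N}\delta_i\le\gamma\}$. Since $y$ is a $0$–$1$ vector, $\delta y = \sum_{i: y_i=1}\delta_i$, so the adversary only benefits from spending budget on indices $i$ with $y_i=1$; components with $y_i=0$ may be set to $0$ without loss. Hence $g(\gamma) = \max\{\sum_{i: y_i=1}\delta_i : 0\le\delta_i\le d_i,\ \sum_{i:y_i=1}\delta_i\le\gamma\} = \min\{\gamma,\ \sum_{i:y_i=1}d_i\} = \min\{\gamma,\ \sum_{i=1}^n d_i y_i\}$, where the middle equality holds because the objective being maximized is exactly the quantity constrained by $\gamma$, and it can also be pushed up to the total cap $\sum d_i y_i$ by choosing $\delta_i = d_i$ as far as the budget allows.

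Once this identity is established, the conclusion is immediate. Writing $D := \sum_{i=1}^n d_i y_i$, we have $Q_{x,y}(\gamma) = Cx + \underline c y + \min\{\gamma, D\}$, which is continuous and piecewise linear in $\gamma\ge 0$: on $[0,D]$ it equals $Cx+\underline c y + \gamma$, a line of slope $1$ through $(0,Q_{x,y}(0))$ ending at $(D, Q_{x,y}(D))$; on $[D,\infty)$ it equals the constant $Cx+\underline c y + D = Q_{x,y}(D)$, a horizontal segment. This is exactly the description in the statement, with the first segment connecting $(0,Q_{x,y}(0))$ to $(D,Q_{x,y}(D))$ with slope $1$ and the second starting at $(D,Q_{x,y}(D))$ and horizontal.

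The main technical point — the only place any care is needed — is the reduction $g(\gamma) = \min\{\gamma, D\}$, i.e.\ verifying that the optimal adversarial spend is "greedy and saturating": pour budget into the $y_i=1$ coordinates up to their caps $d_i$, and the attained objective equals the budget consumed. The $\le$ direction is the budget constraint itself ($\sum_{i:y_i=1}\delta_i\le\gamma$) together with the box constraint ($\sum_{i:y_i=1}\delta_i\le\sum_{i:y_i=1}d_i = D$); the $\ge$ direction is an explicit feasible choice of $\delta$ achieving $\min\{\gamma,D\}$ (for instance, distribute $\min\{\gamma,D\}$ among the coordinates with $y_i=1$ respecting the caps, which is possible precisely because $\min\{\gamma,D\}\le D$). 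I would state this in one or two lines; no optimization machinery is required because the objective and the resource being budgeted coincide. Everything else is bookkeeping about the graph of $t\mapsto\min\{t,D\}$ shifted vertically by the constant $Cx+\underline c y$.
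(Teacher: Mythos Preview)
Your proof is correct. The paper states this as an observation without a formal proof (it is merely ``supported by Figure~\ref{FLines}''), and your argument---reducing the inner maximization to $\min\{\gamma,\sum_i d_i y_i\}$ and reading off the two-piece linear shape---is exactly the computation the paper leaves implicit.
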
  

Denote by $(x^0,y^0)$ and $(x^\infty,y^\infty)$ optimal solutions of the problem {\sc Con-Vol} with $\Gamma=0$ and $\Gamma=\infty$, respectively. In these problems, the uncertain costs are fixed to be $\underline{c}_i$, $i\in N$, and $\underline{c}_i+d_i$, $i\in N$, respectively. In the previous section (Theorem \ref{TSpec}), we have shown how to find solutions $(x^0,y^0)$ and $(x^\infty,y^\infty)$ for problems {\sc Con-Vol}($p$) and  {\sc Con-Vol}($p,k$) in $O(n)$ and $O(nk^2p)$ time, respectively. The following statement can be easily deduced from Figure~\ref{FLines}.

\begin{observation} \label{OLines2} Among all functions $Q_{x,y}(\gamma)$, $(x,y)\in X\times Y$, the lowest inclined segment is determined by the function $Q_{x^0,y^0}(\gamma)$ and the lowest horizontal segment is determined by the function $Q_{x^\infty,y^\infty}(\gamma)$. They intersect at the $\gamma$-point $\gamma^*=Q_{x^\infty,y^\infty}(\sum_{i=1}^n d_iy_i^\infty)-Q_{x^0,y^0}(0)$. 
\end{observation}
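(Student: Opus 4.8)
The plan is to reduce the statement to the two extreme instances of {\sc Con-Vol}, namely those with $\Gamma=0$ and $\Gamma=\infty$, whose optimal solutions $(x^0,y^0)$ and $(x^\infty,y^\infty)$ were already characterised in Theorem \ref{TSpec}. By Observation \ref{OLines1}, for every $(x,y)\in X\times Y$ the inclined part of $Q_{x,y}(\gamma)$ lies on the line $\gamma\mapsto Q_{x,y}(0)+\gamma$ of slope $1$, and the horizontal part lies on the line $\gamma\mapsto Q_{x,y}\bigl(\sum_{i=1}^n d_iy_i\bigr)$. Since all the inclined supporting lines share the slope $1$, the pointwise lowest of them is the one with the smallest value at $\gamma=0$; and since all the horizontal supporting lines are constant, the pointwise lowest of them is the one of smallest height. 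Hence it suffices to identify the minimisers over $X\times Y$ of $Q_{x,y}(0)$ and of $Q_{x,y}\bigl(\sum_{i=1}^n d_iy_i\bigr)$.

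First I would observe that $Q_{x,y}(0)=Cx+\underline cy$, which is exactly the objective of {\sc Con-Vol} with $\Gamma=0$ (a zero budget forces $\delta=0$). Therefore $\min_{(x,y)\in X\times Y}Q_{x,y}(0)$ is attained at $(x^0,y^0)$, so the lowest inclined segment among all $Q_{x,y}(\gamma)$ is the one of $Q_{x^0,y^0}(\gamma)$. Symmetrically, $Q_{x,y}\bigl(\sum_{i=1}^n d_iy_i\bigr)=Cx+\underline cy+\sum_{i=1}^n d_iy_i=Cx+(\underline c+d)y$, which is the objective of {\sc Con-Vol} with $\Gamma=\infty$ (an unbounded budget lets the adversary set every $\delta_i$ to $d_i$). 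Thus this quantity is minimised at $(x^\infty,y^\infty)$, and the lowest horizontal segment is the one of $Q_{x^\infty,y^\infty}(\gamma)$.

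It then remains to compute the intersection of the two extremal supporting lines. The lowest inclined line is $\gamma\mapsto Q_{x^0,y^0}(0)+\gamma$ and the lowest horizontal line is the constant $Q_{x^\infty,y^\infty}\bigl(\sum_{i=1}^n d_iy_i^\infty\bigr)$; equating the two gives $\gamma^*=Q_{x^\infty,y^\infty}\bigl(\sum_{i=1}^n d_iy_i^\infty\bigr)-Q_{x^0,y^0}(0)$, as claimed (cf. Figure~\ref{FLines}). I would also record that $\gamma^*\ge 0$: indeed $Q_{x^\infty,y^\infty}$ is nondecreasing, so $Q_{x^\infty,y^\infty}\bigl(\sum_{i=1}^n d_iy_i^\infty\bigr)\ge Q_{x^\infty,y^\infty}(0)\ge Q_{x^0,y^0}(0)$, the last inequality by optimality of $(x^0,y^0)$ at $\Gamma=0$.

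The only real subtlety — the point to be careful about — is that no single function $Q_{x,y}$ stays below all the others for every $\gamma$ (the lower envelope of the family switches from $Q_{x^0,y^0}$ to $Q_{x^\infty,y^\infty}$ exactly at $\gamma^*$), so the assertion must be phrased as a claim about the supporting lines of the two kinds of segments rather than about pointwise domination of the functions themselves; once that reading is fixed, everything reduces to the two one-line identifications above.
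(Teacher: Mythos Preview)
Your argument is correct and matches the paper's intended reasoning: the paper does not give a formal proof of this observation at all, stating only that it ``can be easily deduced from Figure~\ref{FLines}'', and your write-up is precisely the elaboration that the figure is meant to suggest. The one point worth adding is that the paper relies on $(x^0,y^0)$ and $(x^\infty,y^\infty)$ via their \emph{definitions} as optimisers of {\sc Con-Vol} at $\Gamma=0$ and $\Gamma=\infty$ respectively (Theorem~\ref{TSpec} only supplies algorithms to compute them in the $(p)$ and $(p,k)$ cases), so your first sentence slightly overstates what is being invoked; otherwise your identification of $Q_{x,y}(0)$ and $Q_{x,y}\bigl(\sum_i d_iy_i\bigr)$ with those two objectives, and the slope-$1$ parallelism from Observation~\ref{OLines1}, are exactly the content the paper leaves implicit.
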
  

Consider function $Q_{x^0,y^0}(\gamma)$ for $\gamma\le\gamma^*$. The corresponding $\delta^0$-vector can be determined in $O(n)$ time by the following algorithm. Calculate partial sums $\sum_{j=1}^i d_jy^0_j$ until $\sum_{j=1}^{i-1} d_jy^0_j< \gamma$ and  $\sum_{j=1}^i d_jy^0_j\ge \gamma$, $i \in N$. Set $\delta^0_j=d_jy^0_j$ for $j=1,\ldots,i-1$, $\delta^0_i= \gamma -\sum_{j=1}^{i-1} d_jy^0_j$, and $\delta^0_j=0$ for $j=i+1,i+2,\ldots,n$.
Observations \ref{OLines1} and \ref{OLines2} imply the following theorem.
\begin{theorem} \label{TCntnVol} For the general problem {\sc Con-Vol}, if solutions $(x^0,y^0)$ and $(x^\infty,y^\infty)$ can be found in $O(T)$ time, then the problem {\sc Con-Vol} can be solved in $O(n+T)$ time by the following algorithm: If $\Gamma\le \gamma^*$, then the optimal solution is $(x^0,y^0,\delta^0)$.  If $\Gamma>\gamma^*$, then the optimal solution is $(x^\infty,y^\infty,\delta^\infty)$, where $\delta^\infty_i=d_iy^\infty_i$, $i\in N$. \end{theorem}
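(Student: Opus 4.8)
The plan is to reduce everything to the single real-valued function $f(\Gamma):=\min_{(x,y)\in X\times Y}Q_{x,y}(\Gamma)$, which by definition equals the optimal value of {\sc Con-Vol} for budget $\Gamma$; moreover, for a fixed decision $(x,y)$ the adversary attains the value $Q_{x,y}(\gamma)$ simply by raising $\delta_i$ up to $d_i$ on the indices with $y_i=1$, in any order, until the budget $\min\{\gamma,\sum_i d_iy_i\}$ is exhausted. First I would record, from Observation~\ref{OLines1}, the closed form $Q_{x,y}(\gamma)=\min\{Q_{x,y}(0)+\gamma,\ Q_{x,y}(\infty)\}$, where I write $Q_{x,y}(\infty):=Q_{x,y}(\sum_i d_iy_i)=Cx+(\underline c+d)y$ for the height of the horizontal part (and $Q_{x,y}(0)=Cx+\underline c y$).

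The key step is to swap the two minima: since $\min_i\min\{a_i,b_i\}=\min\{\min_i a_i,\min_i b_i\}$,
$$f(\Gamma)=\min\Big\{\Gamma+\min_{(x,y)}Q_{x,y}(0),\ \min_{(x,y)}Q_{x,y}(\infty)\Big\}=\min\big\{\Gamma+Q_{x^0,y^0}(0),\ Q_{x^\infty,y^\infty}(\infty)\big\},$$
because $(x^0,y^0)$ is optimal for {\sc Con-Vol} with $\Gamma=0$, hence minimizes $Q_{\cdot,\cdot}(0)$, and $(x^\infty,y^\infty)$ is optimal for $\Gamma=\infty$, hence minimizes $Q_{\cdot,\cdot}(\infty)$. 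Recalling $\gamma^*=Q_{x^\infty,y^\infty}(\infty)-Q_{x^0,y^0}(0)$ from Observation~\ref{OLines2}, this identity says precisely that $f(\Gamma)=\Gamma+Q_{x^0,y^0}(0)$ when $\Gamma\le\gamma^*$ and $f(\Gamma)=Q_{x^\infty,y^\infty}(\infty)$ when $\Gamma>\gamma^*$.

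It remains to certify that the two solutions returned by the algorithm actually realize these values, and here the only thing needing care is the sandwich $\sum_i d_iy^\infty_i\le\gamma^*\le\sum_i d_iy^0_i$: the left inequality rewrites as $Q_{x^0,y^0}(0)\le Q_{x^\infty,y^\infty}(0)$ and the right one as $Q_{x^\infty,y^\infty}(\infty)\le Q_{x^0,y^0}(\infty)$, both immediate from optimality of $(x^0,y^0)$ and $(x^\infty,y^\infty)$ (this is exactly the geometry of Figure~\ref{FLines}). Consequently, if $\Gamma\le\gamma^*\le\sum_i d_iy^0_i$ then $Q_{x^0,y^0}(\Gamma)=Q_{x^0,y^0}(0)+\Gamma=f(\Gamma)$, and the vector $\delta^0$ produced by the prefix-sum procedure uses budget exactly $\Gamma\le\sum_i d_iy^0_i$, so it is a feasible adversarial response attaining $Q_{x^0,y^0}(\Gamma)$; hence $(x^0,y^0,\delta^0)$ is optimal. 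Symmetrically, if $\Gamma>\gamma^*\ge\sum_i d_iy^\infty_i$ then $Q_{x^\infty,y^\infty}(\Gamma)=Q_{x^\infty,y^\infty}(\infty)=f(\Gamma)$, and $\delta^\infty_i=d_iy^\infty_i$ uses budget $\sum_i d_iy^\infty_i\le\Gamma$, so it is feasible and attains $Q_{x^\infty,y^\infty}(\Gamma)$; hence $(x^\infty,y^\infty,\delta^\infty)$ is optimal.

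For the running time, the two solutions cost $O(T)$ by hypothesis; evaluating $Q_{x^0,y^0}(0)$ and $Q_{x^\infty,y^\infty}(\infty)$, forming $\gamma^*$, and comparing it with $\Gamma$ cost $O(n)$; and the prefix-sum construction of $\delta^0$ (resp. the direct assignment of $\delta^\infty$) costs $O(n)$, for a total of $O(n+T)$. I do not expect a genuine obstacle: the substantive content is already carried by Observations~\ref{OLines1} and~\ref{OLines2}, and the remaining work is the legitimacy of the min-swap together with verifying the sandwich inequalities on $\gamma^*$ that make the constructed adversarial responses budget-feasible.
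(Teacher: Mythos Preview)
Your argument is correct and follows essentially the same approach as the paper, which simply states that Observations~\ref{OLines1} and~\ref{OLines2} imply the theorem without further elaboration. Your proof makes explicit the min-swap identity and the sandwich inequalities $\sum_i d_iy^\infty_i\le\gamma^*\le\sum_i d_iy^0_i$ that are implicit in the paper's picture, so your version is a fleshed-out rendering of the paper's one-line justification rather than a different route.
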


Theorems \ref{TSpec} and \ref{TCntnVol} imply the following corollary.
\begin{corollary} \label{CCntnVol} Problems {\sc Con-Vol}$(p)$ and {\sc Con-Vol}$(p,k)$ can be solved in $O(n)$ and $O(nk^2p)$ time, respectively. \end{corollary}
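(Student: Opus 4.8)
The plan is to obtain Corollary \ref{CCntnVol} as a direct bookkeeping combination of Theorems \ref{TSpec} and \ref{TCntnVol}. Theorem \ref{TCntnVol} already reduces the solution of the general problem {\sc Con-Vol} to two ingredients: computing the extreme-budget solutions $(x^0,y^0)$ and $(x^\infty,y^\infty)$ (the optima of {\sc Con-Vol} with $\Gamma=0$ and $\Gamma=\infty$), in some time $O(T)$, plus an additional $O(n)$ overhead for evaluating $\gamma^*$, comparing it with the actual budget $\Gamma$, and constructing the associated $\delta^0$- or $\delta^\infty$-vector. Hence the whole task is to produce, for each of the special feasibility sets $X\times Y$ corresponding to {\sc Con-Vol}($p$) and to {\sc Con-Vol}($p,k$), a bound on $T$; the total running time is then $O(n+T)$ by Theorem \ref{TCntnVol}.

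First I would treat {\sc Con-Vol}($p$). Its $\Gamma=0$ and $\Gamma=\infty$ versions are exactly the extreme-budget problems listed in Observation \ref{Oextr}, where the uncertain costs collapse to $\underline c_i$ and to $\underline c_i+d_i$, respectively; by Theorem \ref{TSpec} each of these is solvable in $O(n)$ time (the $O(n)$ selection over the values $m_i=\min\{C_i,\underline c_i+o_i\}$ with $o_i=0$, respectively $o_i=d_i$, via the median-finding technique). Thus $T=O(n)$, and Theorem \ref{TCntnVol} gives an overall bound of $O(n+T)=O(n)$. For {\sc Con-Vol}($p,k$) the argument is identical in structure: by Observation \ref{Oextr} and Theorem \ref{TSpec}, the $\Gamma=0$ and $\Gamma=\infty$ versions reduce to {\sc Dis-Car}($p,k$)-type problems and are each solvable in $O(nk^2p)$ time by the $O(nk^2p)$ dynamic program of Section \ref{SecDP}; hence $T=O(nk^2p)$ and Theorem \ref{TCntnVol} yields a total running time of $O(n+nk^2p)=O(nk^2p)$.

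The only thing requiring care — and it is minor — is to confirm that the feasibility set $X\times Y$ used for {\sc Con-Vol}($p$) (respectively {\sc Con-Vol}($p,k$)) coincides with the one appearing in the corresponding $\Gamma\in\{0,\infty\}$ entries of Theorem \ref{TSpec}, which is immediate from the definitions in Section \ref{SForm}, and to make sure the quantity substituted for $T$ in Theorem \ref{TCntnVol} is the time to solve the two extreme-budget problems, not the time to solve the original problem with budget $\Gamma$. There is no genuine obstacle here: the result is essentially a restatement of Theorems \ref{TSpec} and \ref{TCntnVol} together, with the $O(n)$ overhead of Theorem \ref{TCntnVol} absorbed into the dominant term in each case.
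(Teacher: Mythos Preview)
Your proposal is correct and follows exactly the paper's approach: the paper simply states that the corollary is implied by Theorems \ref{TSpec} and \ref{TCntnVol}, and your write-up is a careful unpacking of that one-line implication. Your additional remarks about matching feasibility sets and absorbing the $O(n)$ overhead are accurate and harmless elaborations.
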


\section{Weighted uncertainty} \label{SecW} The uncertainty budget considered so far in this article and the articles cited above is such that contributions of all items to it is uniform. In some practical situations, it can be useful to assume that the items contributions to the uncertainty budget are not uniform (uncertain costs of some items are more resilient to the changes than others). In this case, weights $w_i$ can be associated with items $i\in N$, and the uncertainty budget constraint can be expressed as 
$\sum\limits_{i\in N}w_i\delta_i\le\Gamma$ and $\sum\limits_{\{i\in N:\delta_i>0\}}w_i\le\Gamma$ in the case of weighted volume and weighted cardinality, respectively. A special case $w_i=1/d_i$, $i\in N$, of the ``weighted uncertainty'' has been studied by Chassein and Goerigk \cite{Chassein2021} and Brauner et al. \cite{Brauner2024} in the context of two-stage min-max-min item selection problems with alternative solutions.

We denote ``weighted'' variants of the problems studied in this paper by adding ``{\sc W-}'' in front of their notations, {\it e.g.}, {\sc W-Con-Car} and {\sc W-Dis-Vol}($p,k$).  Since problems with arbitrary weights are more general than their unit-weight counterparts and the input size is only increased by $n$ numbers, all the hardness proofs apply for the weighted counterparts. Furthermore, the difference between the ``weighted cardinality'' problem {\sc W-Dis-Car} and the ``unweighted volume'' problem {\sc Dis-Vol} is that the constraint $\sum\limits_{\{i\in N:\delta_i=d_i\}}w_i\le\Gamma$ is used instead of $\sum\limits_{\{i\in N:\delta_i=d_i\}}d_i\le\Gamma$. Therefore,  {\sc Dis-Vol} is a special case of {\sc W-Dis-Car} when $w_i=d_i$, $i\in N$, and {\sc W-Dis-Car} cannot be easier  than {\sc Dis-Vol}. Taking into account the fact that the problems {\sc W-Con-Car} and {\sc W-Dis-Car} are equivalent, we  make the following observation.

\begin{observation} \label{OWNP} If weights $w_i$, $i\in N$, are arbitrary, then problems {\sc W-Dis-Vol}($p$), {\sc W-Dis-Car}($p$) and  {\sc W-Con-Car}($p$) are NP-hard and $\Sigma^p_2$-hard, and problems {\sc W-Dis-Vol}($p,k$),  {\sc W-Dis-Car}($p,k$) and  {\sc W-Con-Car}($p,k$) are 
NP-hard. 
\end{observation}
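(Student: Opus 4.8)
The plan is to assemble the statement from three observations, two of which are essentially made in the paragraph preceding it and the third of which reuses the continuous-versus-discrete argument from the end of Section \ref{SecDP}. First I would note that each weighted problem contains its unit-weight version as the special case $w_i=1$, $i\in N$, with the encoding length increased by only $n$ rational numbers; hence the reductions behind Theorems \ref{PRUD1NPr2}, \ref{PRUD1NPrS} and \ref{PRUD1NPrr} carry over unchanged, giving NP-hardness and $\Sigma_2^p$-hardness of {\sc W-Dis-Vol}($p$) and NP-hardness of {\sc W-Dis-Vol}($p,k$).

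Next, taking $w_i=d_i$, $i\in N$, in a weighted-cardinality problem turns its budget constraint $\sum_{\{i:\delta_i=d_i\}}w_i\le\Gamma$ into $\sum_{\{i:\delta_i=d_i\}}d_i\le\Gamma$, which equals the volume constraint $\sum_{i\in N}\delta_i\le\Gamma$ because $\delta_i\in\{0,d_i\}$ forces $\delta_i=d_i$ precisely on the indices being summed. Thus {\sc Dis-Vol}($p$) and {\sc Dis-Vol}($p,k$) are special cases of {\sc W-Dis-Car}($p$) and {\sc W-Dis-Car}($p,k$), so the former inherits NP-hardness and $\Sigma_2^p$-hardness and the latter inherits NP-hardness. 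Finally, I would argue that {\sc W-Con-Car} is equivalent to {\sc W-Dis-Car} by the reasoning used for the unweighted case at the close of Section \ref{SecDP}: if $0<\delta_i\le d_i$ for some item $i$ in an adversary strategy, replacing $\delta_i$ by $d_i$ keeps the support $\{i\in N:\delta_i>0\}$ --- hence the weighted consumption $\sum_{\{i:\delta_i>0\}}w_i$ --- unchanged while not decreasing $(\underline c+\delta)y$, so an optimal adversary strategy may be taken in $\{0,d_i\}^n$. This makes {\sc W-Con-Car}($p$) and {\sc W-Con-Car}($p,k$) equivalent to {\sc W-Dis-Car}($p$) and {\sc W-Dis-Car}($p,k$) and transfers the hardness once more, completing the proof.

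I do not anticipate any real difficulty: the only points to check are routine --- that the instances produced by Theorems \ref{PRUD1NPr2}--\ref{PRUD1NPrr} remain legal {\sc W-Dis-Car} instances under the substitution $w_i=d_i$ (they do, as all $d_i$ there are positive integers), and that introducing weights does not affect the support-based reduction of the continuous adversary to the discrete one (it does not, since the cardinality constraint depends only on the support of $\delta$).
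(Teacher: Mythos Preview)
Your proposal is correct and follows essentially the same approach as the paper: the paper's justification, contained in the paragraph immediately preceding the observation, uses precisely the three ingredients you identify --- the unit-weight special case for {\sc W-Dis-Vol}, the substitution $w_i=d_i$ to embed {\sc Dis-Vol} into {\sc W-Dis-Car}, and the equivalence of {\sc W-Con-Car} and {\sc W-Dis-Car} via the support argument from the end of Section~\ref{SecDP}. Your write-up is in fact more explicit than the paper on each point, and your closing remark about the $d_i$ being positive in the reduction instances correctly disposes of the only edge case.
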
  

Consider the special case $w_i=1/d_i$, $i\in N$, in which items contributions to the uncertainty budget can be seen as 
{\it normalized}. In this case,  $\sum\limits_{\{i\in N:\delta_i=d_i\}}w_i\delta_i=|\{i\in N:\delta_i=d_i\}|$ and the problem {\sc W-Dis-Vol} reduces to 
the problem {\sc Dis-Car}. Therefore, the following observation holds.

\begin{table*}[!htbp]
\centering
\scalebox{0.85}{
\begin{tabular}{|l|l|l|}
 \hline
Problem  & Complexity & Reference\\ \hline \hline
{\sc Any-Car}($p$)  &   $O(np^2)$ & Theorem \ref{TCntn} \\ \hline
 {\sc Any-Car}($p,k$)  &    $O(nk^2p)$ & Theorem \ref{TCntn}  \\ \hline
{\sc Dis-Vol}($p$) & NP-hard & Theorem \ref{PRUD1NPr2} \\ \hline
{\sc Dis-Vol}($p$) & $\Sigma^p_2$-hard & Theorem \ref{PRUD1NPrS} \\ \hline
{\sc All}($p$) & NP-hard, $\Sigma^p_2$-hard & Corollary \ref{CorAll} \\ \hline
{\sc Dis-Vol}($p,k$) & NP-hard, $\Sigma^p_2$-hard & Theorem \ref{PRUD1NPrr} \\ \hline
 {\sc Dis-Vol}($p$) for fixed $p$  & $O(n^p)$ &  Theorem \ref{TSpec}  \\ \hline
 {\sc Dis-Vol}($p,k$) for fixed $p$ & $O(n^{2p})$ &  Theorem \ref{TSpec}  \\ \hline
{\sc Con-Vol}($p$) & $O(n)$ & Corollary \ref{CCntnVol} \\ \hline
{\sc Con-Vol}($p,k$) & $O(nk^2p)$ & Corollary \ref{CCntnVol} \\ \hline
{\sc Any-Vol}($p$),  {\sc Any-Car}($p$) for $\Gamma\in\{0,\infty\}$ &   $O(n)$ &  Theorem \ref{TSpec}  \\ \hline
{\sc Any-Vol}($p,k$), {\sc Any-Car}($p,k$)  for $\Gamma\in\{0,\infty\}$&   $O(nk^2p)$ &  Theorem \ref{TSpec}  \\ \hline
{\sc Con-Vol} & $O(n+T)$ & Theorem \ref{TCntnVol}  \\  \hline
{\sc W-Dis-Vol}($p$),{\sc W-Dis-Vol}($p,k$), {\sc W-Dis-Car}($p$), {\sc W-Dis-Car}($p,k$)  & NP-hard & Observation \ref{OWNP} \\ \hline
{\sc W-Dis-Vol}($p$) for $w_i=1/d_i$, $i\in N$ & $O(np^2)$ & Observation \ref{OWDV} \\  \hline
{\sc W-Dis-Vol}($p,k$) for $w_i=1/d_i$, $i\in N$  & $O(nk^2p)$ & Observation \ref{OWDV} \\  \hline
{\sc W-Con-Vol}($p$), {\sc W-Con-Vol}($p,k$) & Open &  \\ \hline
{\sc W-Con-Car}($p$), {\sc W-Con-Car}($p,k$) for $w_i=1/d_i$, $i\in N$ & Open &  \\ \hline
\end{tabular}
}
\caption{Computational complexity. {\sc Any}$\ \in$\{{\sc Con},{\sc Dis}\}. $T$ is time to find $(x^0,y^0)$ and $(x^\infty,y^\infty)$} \label{tab-Comp}
\end{table*}


\begin{observation} \label{OWDV} If $w_i=1/d_i$, $i\in N$, then the problems {\sc W-Dis-Vol}($p$) and {\sc W-Dis-Vol}($p,k$) can be solved in $O(np^2)$ and $O(nk^2p)$ time, respectively. 
\end{observation}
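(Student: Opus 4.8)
The plan is to observe that when $w_i = 1/d_i$ the weighted volume constraint collapses to an ordinary cardinality constraint, so that {\sc W-Dis-Vol}$(p)$ and {\sc W-Dis-Vol}$(p,k)$ become instances of {\sc Dis-Car}$(p)$ and {\sc Dis-Car}$(p,k)$, and then invoke Theorem \ref{TCntn}. First I would recall that in {\sc W-Dis-Vol} the adversary's constraint reads $\sum_{\{i\in N:\delta_i=d_i\}} w_i \le \Gamma$, and in the discrete setting each $\delta_i \in \{0, d_i\}$; substituting $w_i = 1/d_i$ gives $w_i \delta_i = 1$ exactly when $\delta_i = d_i$ and $0$ otherwise, so $\sum_{i\in N} w_i \delta_i = |\{i\in N : \delta_i = d_i\}| = |\{i\in N : \delta_i > 0\}|$. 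Hence the constraint $\sum_{i\in N} w_i \delta_i \le \Gamma$ is literally the cardinality budget constraint $|\{i\in N : \delta_i > 0\}| \le \Gamma$ appearing in the definition of {\sc Dis-Car}. The objective function $Cx + \max\{(\underline c + \delta)y : \ldots\}$ is unchanged between the two problem families, as are the feasible sets $X\times Y$ defining the $(p)$ and $(p,k)$ variants. Therefore the two problems coincide as optimization problems, instance by instance.

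The second step is simply to apply Theorem \ref{TCntn}, which states that {\sc Dis-Car}$(p)$ is solvable in $O(np^2)$ time and {\sc Dis-Car}$(p,k)$ in $O(nk^2p)$ time. One minor bookkeeping point: the reduction should be noted to be computable in $O(n)$ time (we are just reading off that $\Gamma$ plays the role of the cardinality bound and discarding the now-redundant weight vector, after possibly replacing $\Gamma$ by $\lfloor\Gamma\rfloor$ as in Section \ref{SecDP}), so the running times transfer without change. This yields the claimed $O(np^2)$ and $O(nk^2p)$ bounds for {\sc W-Dis-Vol}$(p)$ and {\sc W-Dis-Vol}$(p,k)$ respectively.

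Since this is an observation rather than a theorem, I would keep the argument to a few sentences and would not belabor it. There is essentially no obstacle here: the only thing to be slightly careful about is making the identity $\sum_{i\in N} w_i\delta_i = |\{i: \delta_i = d_i\}|$ explicit and noting it holds because the substitution is valid only in the \emph{discrete} regime (where $\delta_i$ is forced to one of the two endpoints) — in the continuous case the analogous identity fails, which is precisely why the continuous weighted-cardinality case stays open in Table \ref{tab-Comp}. I would phrase the observation's proof as: "By the substitution $w_i = 1/d_i$, the adversarial constraint $\sum_{i\in N} w_i\delta_i \le \Gamma$ in {\sc W-Dis-Vol} becomes $|\{i\in N : \delta_i = d_i\}| \le \Gamma$ since $\delta_i\in\{0,d_i\}$; hence {\sc W-Dis-Vol}$(p)$ and {\sc W-Dis-Vol}$(p,k)$ are exactly {\sc Dis-Car}$(p)$ and {\sc Dis-Car}$(p,k)$, and the claim follows from Theorem \ref{TCntn}."
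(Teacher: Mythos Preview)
Your approach is correct and matches the paper's: the substitution $w_i=1/d_i$ turns the discrete weighted-volume constraint into the cardinality constraint, reducing {\sc W-Dis-Vol} to {\sc Dis-Car}, after which Theorem~\ref{TCntn} gives the stated running times. One small slip: your opening recall of the {\sc W-Dis-Vol} constraint as $\sum_{\{i:\delta_i=d_i\}} w_i \le \Gamma$ is not the weighted-volume constraint (which is $\sum_{i\in N} w_i\delta_i \le \Gamma$); fortunately your subsequent computation uses the correct expression $\sum_{i\in N} w_i\delta_i$, so the argument stands.
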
  

The computational complexity of the continuous problems {\sc W-Con-Vol}($p$), {\sc W-Con-Vol}($p,k$) for arbitrary weights and the ``normalized'' problems  {\sc W-Con-Car}($p$) and {\sc W-Con-Car}($p,k$) with weights $w_i=1/d_i$, $i\in N$, remain open. We note that  ``normalized'' problems with a cardinality budget are of only theoretical interest, since the pure cardinality budget is already normalized.

\section{Conclusions} Computational complexity results of the problems studied in this paper are given in Table~\ref{tab-Comp}. For future research, it is interesting to establish the computational complexity of the open ``weighted'' problems, to develop algorithms faster than the non-linear algorithms, and to develop pseudo-polynomial time algorithms and fully polynomial time approximation schemes (FPTAS) or prove strong NP-hardness of the discrete problems with volume budget and variable $p$.
General and particular ``weighted uncertainties'' represent an interesting topic for future research of various types of uncertain decision making problems.

\bibliographystyle{elsarticle-num}
\bibliography{COMBI}



\end{document}